\def\rm#1{\mathrm{#1}}
\def\cal#1{\mathcal{#1}}
\def\bb#1{\mathbb{#1}}
\def\lie#1{\mathfrak{#1}}
\def\lr#1{\left\langle #1\right\rangle}
\def\co{\colon}
\newcommand{\halmos}{\hfill $\;\;\;\Box$\\}
\newcommand{\h}{\tfrac{1}{2}}
\newcommand{\I}{{e}}
\newcommand{\HH}{{\cal H}^{\bb C}_{\I}}
\DeclareMathOperator{\id}{id}
\DeclareMathOperator{\ad}{ad}
\DeclareMathOperator{\Hol}{Hol}
\DeclareMathOperator{\Ad}{Ad}
\DeclareMathOperator{\End}{End}
\newtheorem{theorem}{Theorem}[section]
\newtheorem{conj}{Conjecture}
 \newtheorem{cor}[theorem]{Corollary}
 \newtheorem{lem}[theorem]{Lemma}
 \newtheorem{prop}[theorem]{Proposition}
 \newtheorem{claim}[theorem]{Claim}
\theoremstyle{definition}
\theoremstyle{remark}
\newtheorem{rem}[theorem]{Remark}
\begin{document}

\title[Riemannian Foliations on Compact Lie Groups]{Totally Geodesic Riemannian Foliations on Compact Lie Groups}

\author{Llohann D. Speran\c ca}
\thanks{This work was partially supported by CNPq grant number 404266/2016-9 and FAPESP grant number 2017/10892-7.}
\address{Universidade Federal de S\~ao Paulo, ICT\\
	Av. Cesare Monsueto Giulio Lattes, 1211 - Jardim Santa Ines I\\
	CEP 12231-280 \\
	S\~ao Jos\'e dos Campos, SP, Brazil}
\email{speranca@unifesp.br}

\subjclass[2010]{  MSC 53C35, MSC 53C20 \and MSC 53C12}

\keywords{Lie groups, Riemannian foliations, symmetric spaces, holonomy, nonnegative sectional curvature}

\begin{abstract}
	In 86 Ranjan questioned whether a submersion $\pi\co G\to B$ from  a compact simple Lie group with bi-invariant metric is a coset foliation or not, provided the submersion is Riemannian with totally geodesic fibers. Here we answer this question affirmatively, even when the submersion is defined only in an open subset of $G$ (assuming suitable compactness hypothesis).
\end{abstract}

\maketitle
\section{Introduction}\label{sec:1}
The present work is dedicated to the simple question: how to fill a given geometric space with a geometric pattern? Or,  following Thurston \cite{thurston1974construction}: how to construct a manifold out of  stripped fabric?  
For instance, starting with a Lie group $G$, we could use its algebraic structure to construct a pattern: any Lie subgroup $H<G$ induces a decomposition of $G$ by both  right cosets, $\cal F^+_H=\{gH~|~h\in G\}$, and left cosets, $\cal F^-_H=\{Hg~|~h\in G\}$. Such decompositions are called as \textit{coset  foliations}.

In general, a \textit{foliation}\footnote{Only non-singular foliations with connected leaves are considered in this paper.} $\cal F$ on $M$ is the decomposition of $M$ into the integrable maximal submanifolds of an involutive subbundle  $\cal V=T\cal F\subseteq TM$. Such submanifolds are called \textit{leaves}. Existence, obstructions and classifications of foliations are deep topological subjects (see e.g. Haefliger \cite{haefliger} and Thurston \cite{thurston1976existence,thurston1974theory,thurston1974construction}) and they acquire a  geometric flavor by imposing distance rigidity between leaves: a foliation is called \textit{Riemannian} if its leaves are locally equidistant (see e.g. Molino \cite{molino1988riemannian} or Ghys \cite{ghys1984feuilletages}). 

The decomposition into the fibers of a Riemannian submersion is a main example of a Riemannian foliation: a submersion $\pi:M\to B$ is \emph{Riemannian} if the restriction $d\pi_p|_{(\ker d\pi_p)^\perp}$ is an isometry to $ T_{\pi(p)}B$ for every $p\in M$ (see e.g. O'Neill \cite{oneill} or Gromoll--Walschap \cite{gw}). The classification of Riemannian submersions from compact Lie groups with bi-invariant metrics was asked by Grove \cite[Problem 5.4]{grove2002geometry}.

Indeed, in such groups all known Riemannian foliations with totally geodesic leaves are coset foliations.
Therefore, following Ranjan \cite{ranjan}, it is natural to ask whether coset foliations are the only Riemannian foliations with totally geodesic leaves on such groups. The affirmative answer to this question is supported by the following conjecture, commonly called ``Grove's Conjecture'' (see also Munteanu--Tapp \cite{tappmunteanu2}):

\begin{conj}\label{conj:grove}
Let $G$ be a compact simple Lie group with a bi-invariant metric. A Riemannian submersion  $\pi\co G\to B$ with connected totally geodesic fibers is induced either by left or right cosets.
\end{conj}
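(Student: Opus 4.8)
The plan is to pass to the leaf $F$ of $\cal F$ through the identity $e\in G$, prove that $F$ is a subgroup, and then propagate this to every leaf. Since $\cal F$ is Riemannian with connected totally geodesic leaves, $F$ is a connected complete totally geodesic submanifold of $G$ through $e$; as $G$ with its bi-invariant metric is a symmetric space, the classical correspondence between totally geodesic submanifolds of symmetric spaces and Lie triple systems gives $F=\exp(\lie m)$ with $\lie m=T_eF\subseteq\lie g$ satisfying $[[\lie m,\lie m],\lie m]\subseteq\lie m$. The problem is thus to upgrade $\lie m$ from a Lie triple system to a subalgebra.

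To that end I would bring in the O'Neill--Gray formalism for the local Riemannian submersion defined by $\cal F$. Total geodesy of the leaves means the O'Neill tensor $T$ vanishes, so a geodesic of $G$ tangent to a leaf stays inside that leaf, while --- as for any Riemannian submersion --- a geodesic horizontal at one point is horizontal throughout. Because geodesics through $e$ are one-parameter subgroups, $\exp(t\lie m)\subseteq F$, and for $X$ in the horizontal space $\lie p:=\lie m^\perp$ the curve $t\mapsto\exp(tX)$ is horizontal. Plugging the explicit bi-invariant curvature $K(X,Y)=\tfrac14\|[X,Y]\|^2$ into the submersion curvature identities, the $A$-tensor along $\exp(tX)$ measures precisely the gap between the curvature of $G$ and that of the quotient, and nonnegativity of $K$ turns the Jacobi/holonomy equation along horizontal geodesics into a rigidity statement: the holonomy fields should be governed entirely by $\ad X$, forcing bracket relations between $\lie p$ and $\lie m$.

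The decisive ingredient is the holonomy of $\cal F$. For a totally geodesic Riemannian foliation, holonomy transport along a horizontal curve is an isometry between leaves; I would transport $F$ along the horizontal geodesics $\exp(tX)$ and along loops based at $e$, and use that geodesic symmetries of $G$ are global isometries, to recognize these holonomy maps as left (or right) translations. That would simultaneously give $\Ad$-invariance of $\lie m$ --- hence $[\lie m,\lie m]\subseteq\lie m$, so $F=:H$ is a subgroup --- and identify the leaf through $\exp(X)$ with the coset $\exp(X)H$ (resp.\ $H\exp(X)$). Connectedness of $G$ then forces $\cal F$ to be globally $\cal F^+_H$ or globally $\cal F^-_H$, the two alternatives being exchanged by the inversion isometry and unable to mix when $G$ is simple.

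The genuine obstacle I anticipate is this last step: promoting the infinitesimal rigidity extracted from the curvature computation --- which a priori constrains $\cal F$ only to first order along horizontal geodesics through $e$ --- to the global conclusions that $\lie m$ is a subalgebra and that \emph{every} leaf is a coset. Controlling the holonomy globally, and in particular handling leaves that are not reflective submanifolds of $G$, is where the simplicity hypothesis and a careful use of the symmetric-space structure of $G$ must do the real work; the remainder is the bookkeeping of O'Neill's equations.
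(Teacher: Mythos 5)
Your proposal is a program rather than a proof, and its two pivotal claims sit exactly where the hardest work is required. First, the assertion that along horizontal geodesics ``the holonomy fields are governed entirely by $\ad X$'' is not a consequence of O'Neill's equations: in a bi-invariant metric they only give $(A^\xi)^2=\tfrac14\ad_\xi^2$ (equation \eqref{eq:Atoad}), which determines $A^\xi$ only up to a sign on each root space. Resolving that sign ambiguity --- equivalently, splitting the horizontal space into commuting ``left'' and ``right'' parts --- is the core difficulty; Ranjan could do it only under the extra hypothesis that a maximal torus lies in a leaf, and in the paper it occupies sections \ref{sec:thmB}--\ref{sec:good} (the $A$-root system, Munteanu--Tapp's good-triple identity, Proposition \ref{prop:bracmaster}, Theorem \ref{prop:Hpm}). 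Second, ``recognizing the holonomy maps as left (or right) translations'' presupposes what is to be proved. Nothing in your argument constrains the leaf type: a totally geodesic leaf is a priori only a symmetric space, and the foliation of $SO(8)$ by totally geodesic round $7$-spheres shows that being a subgroup is not a formality. The paper must first establish the local Killing property of the leaves (Proposition \ref{thm:killingA} combined with Theorem \ref{thm:A}), invoke Berestovskii--Nikonorov to reduce to Euclidean, round $7$-sphere and compact group factors, and then run the $\lie{so}(8)/\lie{so}(7)$ computation of section \ref{sec:7case} to exclude the sphere factor; only after that is $T_{\id}L_{\id}$ known to be a subalgebra rather than merely a Lie triple system.

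Even granting that $L_{\id}$ is a subgroup, the passage from ``the identity leaf is a coset'' to ``every leaf is a coset'' is not bookkeeping and does not follow from connectedness plus simplicity: this is Problem 2 of section \ref{sec:proof}, settled by showing $H_{\id}(\cal F)=\{\id\}$, constructing the algebra of holonomy-invariant vertical fields of section \ref{sec:principal}, and verifying through Proposition \ref{prop:isometricG} and the Clifford--Killing structure that the resulting locally free action is by isometries (triviality of $H_p(\cal F)$ alone is not sufficient, as the Gromoll--Meyer bundle $Sp(2)\to\Sigma^7$ shows). Your closing paragraph concedes that promoting the infinitesimal curvature information to these global statements is where ``the real work'' lies; that concession is precisely the gap, and it is most of the content of the theorem.
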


Here Ranjan's question together with Conjecture \ref{conj:grove} are proved affirmatively, without the simplicity assumption.

\begin{theorem}\label{thm:grove}
	Let $\pi \co G\to B$ be a Riemannian submersion with totally geodesic connected fibers on $G$,  a compact connected Lie group with bi-invariant metric. Then $\pi$ is isometric to a coset foliation. 
\end{theorem}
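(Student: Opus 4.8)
The plan is to reduce Theorem~\ref{thm:grove} to a purely algebraic rigidity statement about the leaf through the identity, and then to establish that statement from the transverse geometry of $\cal{F}$.

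\emph{Reformulation.} Fix the leaf $L_{0}$ through $e\in G$. Since the leaves of $\cal{F}$ are totally geodesic in the complete manifold $G$, they are complete, so $L_{0}$ is a complete totally geodesic submanifold of the symmetric space $G$ through $e$; hence $L_{0}=\exp(\lie{v})$ where $\lie{v}:=T_{e}\cal{F}\subset\lie{g}$ is a Lie triple system, $[[\lie{v},\lie{v}],\lie{v}]\subseteq\lie{v}$, and $\lie{k}:=\lie{v}+[\lie{v},\lie{v}]$ with $\lie{h}:=[\lie{v},\lie{v}]$ is a symmetric pair realizing $L_{0}$ as a totally geodesic symmetric subspace. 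Transporting $T_{e}\cal{F}$ along $L_{0}$ by the transvections $g\mapsto\exp(\h X)\,g\,\exp(\h X)$ of $G$, which preserve $L_{0}$, gives $T_{\exp X}\cal{F}=(L_{\exp X})_{*}\,\Ad(\exp(-\h X))\lie{v}$ for $X\in\lie{v}$. Finally, every isometry of $G$ is built from translations, automorphisms and the inversion $g\mapsto g^{-1}$, and each of these sends $\lie{v}$ to a subspace that is a subalgebra if and only if $\lie{v}$ is one. Therefore $\cal{F}$ is locally isometric to a homogeneous foliation $\cal{F}^{\pm}_{H}$ with $\lie{h}_{H}=\lie{v}$ precisely when $\lie{v}$ is a subalgebra of $\lie{g}$, and it suffices to prove this.

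\emph{Transverse structure.} Locally $\cal{F}$ is the fibre decomposition of a Riemannian submersion with totally geodesic fibres, so the O'Neill tensor $T$ vanishes, the holonomy diffeomorphisms between nearby leaves along horizontal geodesics are isometries, and the horizontal distribution is a metric connection on the local leaf bundle whose curvature is the integrability tensor $A$. Using $T\equiv0$ together with $\sec\ge0$ (Gauss and O'Neill equations, plus a Jacobi-field argument along horizontal geodesics) one controls $A$ along the leaves; differentiating the holonomy identifications and comparing with the formula $\Ad(\exp(-\h X))\lie{v}$ along $L_{0}$, I would show that the infinitesimal holonomy of $\cal{F}$ acts on a transversal by isometries of $G$ fixing $L_{0}$, and that these, together with the $\Ad(G)$-motions and automorphisms, generate a group of isometries of $G$ that preserves $\cal{F}$ and is transitive on $G$ — so that $\cal{F}$ is, locally, a homogeneous foliation.

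\emph{The crux.} The remaining, and hardest, point is the algebraic one: excluding a vertical space $\lie{v}$ that is a genuine Lie triple system but not a subalgebra, i.e.\ one for which the symmetric pair $(\lie{k},\lie{h})$ above is not the pair of a Lie group. Here I would argue by rigidity: if $\lie{v}$ were not a subalgebra, the transitive isometry group of $G$ constructed above would stabilize the totally geodesic \emph{symmetric but non-group} leaf $L_{0}$, which forces a nontrivial local de Rham splitting of $G$ or contradicts the $A$-tensor identities imposed by $\sec\ge0$ — impossible once $G$ is simple. The general case then follows by passing to the universal cover $\widetilde{G}=G_{1}\times\cdots\times G_{r}\times\bb{R}^{k}$, showing that a totally geodesic Riemannian foliation respects the de Rham decomposition so that $\cal{F}$ splits as a product of such foliations on the factors — each homogeneous by the simple case, and trivially so on the flat factor since every subspace of an abelian Lie algebra is a subalgebra — and reassembling. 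I expect the splitting-versus-$A$-tensor dichotomy on a simple factor to be the technical bottleneck.
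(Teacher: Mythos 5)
Your reduction ``it suffices to prove that $\lie v=T_e\cal F$ is a subalgebra'' is where the argument breaks. Knowing that the identity leaf $L_{\id}=\exp(\lie v)$ is a subgroup $H$ does not imply that the other leaves are cosets of $H$, nor that $\cal F$ is locally isometric to $\cal F^\pm_H$: a homogeneous foliation is a global condition on all leaves, and nothing in your reformulation ties the vertical spaces at points off $L_{\id}$ to translates of $\lie v$ (the transvection computation you make only recomputes $T_{\exp X}L_{\id}$, it says nothing about the foliation). This is exactly what the paper isolates as ``Problem 2,'' and it is settled there only by constructing a locally free isometric action of $\Hol_{\id}(\cal F)$ whose orbits are the leaves (Corollary \ref{thm:principal}, the Clifford--Killing fields $q\mapsto\chi(q,\xi_0)$ of section \ref{sec:principal}, and Jimenez's criterion), which in turn needs $H_{\id}(\cal F)=\{\id\}$, irreducibility via Theorem \ref{prop:dual1}, and Proposition \ref{thm:killingA}. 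Your ``Transverse structure'' paragraph asserts that infinitesimal holonomy together with $\Ad(G)$-motions generates a transitive isometry group preserving $\cal F$, but that assertion is the theorem, not an ingredient: a priori holonomy transformations are only leafwise isometries, and making them into global isometric symmetries of $(G,\cal F)$ is the bulk of the paper's work.

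The second gap is the ``crux'' itself. A totally geodesic leaf is merely a symmetric space, i.e.\ $\lie v$ is a Lie triple system, and compact groups contain many totally geodesic non-group symmetric subspaces (spheres, Grassmannian-type flats, etc.), so no soft ``rigidity/de Rham splitting versus $A$-tensor'' dichotomy can exclude them; the foliation structure must enter quantitatively. In the paper this is done by showing $\lr{A_XY,A_ZW}$ is basic (Proposition \ref{thm:killingA}, which itself rests on the good-triple identity of Munteanu--Tapp, the $A$-root system of Theorems \ref{thm:B}--\ref{thm:tori}, and the commuting splitting $\cal H_\pm(\cal F)$ of Theorem \ref{prop:Hpm}), so that together with Theorem \ref{thm:A} the leaves acquire the local Killing property; Berestovskii--Nikonorov then limits the leaf to a product of a Euclidean factor, round $7$-spheres and compact groups, and the $S^7$ factor is killed by the explicit $\lie{so}(8)/\lie{so}(7)$ computation of section \ref{sec:7case} on the holonomy bundle. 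None of these steps is supplied or replaceable by the argument you sketch, so as it stands the proposal has the right global skeleton (reduce to the identity leaf, split off trivial factors, pass to the universal cover) but is missing the two central mechanisms: the restriction of the leaf type beyond ``symmetric space,'' and the passage from ``identity leaf is a subgroup'' to homogeneity of the whole foliation.
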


Actually, our proof has only one non-local instance, which can be circumvented by suitable compactness hypothesis. In particular, the proof works well for foliations on compact groups and reduce the general problem to foliations whose leaves are totally geodesic flats (as the foliation defined by the fibers of a vector bundle):

\begin{theorem}\label{thm:groveF}
	Let $\cal F$ be a Riemannian foliation with connected totally geodesic leaves on a connected open subset  $U$ of  a compact Lie group with bi-invariant metric $G$. Then $\cal V=T\cal F$ splits as $\cal V=\Delta_0\oplus\Delta_1$, where $\Delta_0$ defines a totally geodesic Riemannian foliation by Euclidean spaces and $\Delta_1$ is isometric to a coset foliation. Moreover, $\cal F$ is a coset foliation if it satisfies one of the following additional hypothesis:
	\begin{enumerate}[$(a)$]
		\item $\tilde L_\I$, the universal cover of a fixed leaf, has no Euclidean factors;
		\item $L_\I$ is complete and the integrability tensor of $\cal F$ is bounded along $L_\I$;
		\item the closure of $L_\I$ is a compact subset of $G$.
	\end{enumerate}
\end{theorem}

The proof is essentially algebraic, with two main geometrical instances: Theorems \ref{thm:A} and \ref{thm:tori}. Both are interesting on their own:  Theorem \ref{thm:A} is a refinement of the celebrated Ambrose--Singer Theorem for foliations with totally geodesic fibers on spaces with non-negative sectional curvature; Theorem \ref{thm:tori} could be readily used in an attempt to generalize Theorem \ref{thm:grove} to symmetric spaces.

The author is tempted to believe that the foliation defined by $\Delta_0$ is (locally) of a metric product $G=G'\times \bb R^k$, therefore it is a coset foliation.

The thesis of Theorem \ref{thm:grove} follows once we show that $\lie g$ decomposes in ideals $\lie g=\lie g_+\oplus \lie g_-$ such that, for every vectors $X,Y$ orthogonal to leaves,
\begin{equation}\label{eq:Aintroduction}
[X,Y]^v=\Big( [X_-,Y_-] - [X_+,Y_+]\Big)^v, 
\end{equation}
where $^v$ denotes the orthogonal projection to $\cal V=T\cal F$ and $X_\pm,Y_\pm$ are the $\lie g_\pm$-components of $X,Y$. It follows from \cite[Corollary 4.2]{tappmunteanu2} that $\pi$ is given by cosets (see section \ref{sec:1main} for details). The decomposition $\lie g_+\oplus\lie g_-$ is obtained by refining \cite[Theorem 1.5]{tappmunteanu2} and  ideas in \cite{ranjan}.

We observe that the hypothesis on Theorem \ref{thm:grove}  can not be relaxed:
Kerin--Shankar \cite{kerin-shankar} presented infinite families of Riemannian submersions  from compact Lie groups with bi-invariant metrics that can not be realized as principal bundles (for instance, the composition $h\circ pr:SO(16)\to S^8$ of the orthonormal frame bundle $pr:SO(16)\to S^{15}$ with the Hopf map $S^{15}\to S^8$ is one such submersion.) Moreover, the  simple group $SO(8)$  admits a foliation, $\cal F_{SO(8)}$, by totally geodesic round 7-spheres (obtained by trivializing the orthonormal frame bundle $SO(8)\to S^7$). Kerin--Shankar examples does not have totally geodesic fibers and $\cal F_{SO(8)}$ is not Riemannian. 


The general classification of Riemannian foliations is wide open. For instance, classifications neither for totally geodesic Riemannian foliations on symmetric spaces, nor for generic  Riemannian foliations on Lie groups are known (we refer to Lytchak \cite{lytchak2014polar}, Lytchak--Wilking \cite{lytchak2016riemannian} and Wilking \cite{wilking2001index} for important developments in other cases). The author believes that the proof here can be partially replicated for the symmetric space case, giving important first steps.

\subsection{Preliminaries and description of each step}\label{sec:1main}
Given a Riemannian foliation $\cal F$ on $M$, we might  think of $\cal F$ locally as an stripped fabric (or a Riemannian submersion) with leaves vertically placed.  At each point $x\in M$, we decompose $T_xM$ as the tangent to the leaf $\cal V_x$ and its orthogonal complement $\cal H_x=(\cal V_x)^\bot$. We call $\cal V_x$ as the \textit{vertical space} and $\cal H_x$ as the \textit{horizontal space} at $x$. Given  $X\in TM$, we denote $X^h,X^v$ the horizontal and vertical components of $X$, respectively.  

A  vector field $X$ is said to be \textit{basic horizontal} if it is $\cal H$-valued and, for every vertical field $V$, $[X,V]$ is vertical. Equivalently, if $\cal F$ is induced by a submersion $\pi$, $X$ is basic if $d\pi(X)$ is fiberwise constant.  The flow of a basic horizontal vector field $X$ induces local diffeomorphisms between leaves (as a standard computation shows -- see e.g. Hirsch \cite[Proposition 17.6]{hirsch2012differential}). These (local) diffeomorphisms are called \textit{(local) holonomy transformations}. It is known that holonomy transformations are (local) isometries if and only if leaves are totally geodesic (see e.g. Gromoll--Walschap \cite[Lemma 1.4.3]{gw}), which is the case at hand.

Given a Riemannian foliation $\cal F$, the Gray--O'Neill integrability tensor $A\co \cal H\times\cal H\to \cal V$ is defined by 
\begin{equation*}
A_XY=\h[\bar X,\bar Y]^v,
\end{equation*}
where $\bar X,\bar Y$ are horizontal extensions of $X,Y$. We follow Ranjan \cite{ranjan} and denote $A^\xi X$ as the opposite dual of $A$ defined by:
\begin{equation*}
\lr{A^\xi X,Y}=-\lr{A_XY,\xi}.
\end{equation*}

Let $\phi_t$ be the flow  of a basic horizontal field $X$ and $c$ an integral curve of $\phi_t$. For any given $\xi\in\cal V_{c(0)}$, we define its \textit{holonomy field}  along $c$ by \[\xi(t)=d\phi_t(\xi).\]
Alternatively, $\xi(t)$ is the only vector field along $c$ that satisfies
\begin{gather*}
\nabla_X \xi(t)=A^\xi X,\\ \xi(0)=\xi.\nonumber
\end{gather*}

The \textit{dual leaf at $p\in M$}, $L^\#_p$, is the subset of points in $M$ that can be joined to $p$ by horizontal curves (compare Wilking \cite{wilkilng-dual} or Gromoll--Walschap \cite[section 1.8]{gw}).

When the leaves of $\cal F$ are the fibers of a principal $G$-bundle $\pi\co P\to B$,  the integrability tensor, infinitesimal holonomy fields and dual leaves replace classical objects:  given a connection 1-form $\omega\co TP\to \lie g$, the curvature 2-form satisfies $\Omega(X,Y)=-2\omega(A_XY)$;  for any holonomy field along a  horizontal curve $c$, $\omega_{c(t)}(\xi(t))=\omega_{c(0)}(\xi(0))$. That is, $\xi(t)$ is the restriction to $c$ of the action field  defined by $\xi$; $L^\#_p$ is  \textit{the holonomy bundle  through $p$} (see e.g. \cite[secnomition II]{knI} for a definition of the last). The celebrated Ambrose--Singer Theorem \cite[Theorem 2]{ambrose-singer} identifies the Lie algebra of the holonomy group of $\pi$  with $\omega(T_pP(p))=\omega(L^\#_p)$. Theorem \ref{thm:A} refines this result in the case of Riemannian foliations/submersions on non-negatively curved ambient spaces:
\begin{theorem}\label{thm:A}
	Let $\cal F$ be a totally geodesic Riemannian foliation on a manifold $M$ of non-negative sectional curvature.  Let $L^\#_p$ be the dual leaf of $\cal F$ through $p$. Then
	\begin{equation*}
	TL^\#_p\cap \cal V_p=span\{A_XY~|~X,Y\in \cal H_p\}.
	\end{equation*}
\end{theorem}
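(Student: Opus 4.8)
First, recall that Theorem~\ref{thm:AS} already gives
\[
TL^\#_p\cap\cal V_p=\operatorname{span}\{\hat c(1)^{-1}(A_XY)~|~X,Y\in\cal H_{c(1)},~c\text{ horizontal from }p\}.
\]
So the content of Theorem~\ref{thm:A} is that, under the extra hypotheses (totally geodesic leaves, $\sec M\ge 0$), the infinitesimal holonomy transformation $\hat c(1)^{-1}$ can be removed: every vector of the form $\hat c(1)^{-1}(A_XY)$ already lies in $\operatorname{span}\{A_{X'}Y'~|~X',Y'\in\cal H_p\}$. Denote the latter space by $\cal A_p\subset\cal V_p$. The inclusion $\supseteq$ is immediate from Theorem~\ref{thm:AS} (take $c$ constant). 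So I must prove $\hat c(1)^{-1}(A_{X}Y)\in\cal A_p$ for every horizontal curve $c$ with $c(0)=p$ and every $X,Y\in\cal H_{c(1)}$.

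The key mechanism is that, for a totally geodesic foliation, holonomy fields are Jacobi fields along horizontal geodesics, and $A_XY$ itself propagates like a holonomy field. Concretely, I would fix a horizontal geodesic $\gamma$ and a horizontal field $X(t)$ along it (e.g. a parallel-along-$\gamma$ horizontal field, or better, the restriction of a basic horizontal field), and study $t\mapsto \hat\gamma(t)^{-1}\big(A_{X(t)}Y(t)\big)$. Using the standard identities for totally geodesic Riemannian foliations — the tensor $A$ is parallel in the $O'$Neill sense along horizontal geodesics up to curvature terms, and $(\nabla_{\dot\gamma}A)_{X}Y$ is controlled by the curvature $R(\dot\gamma,X)Y$ — together with the fact that on a manifold of $\sec\ge 0$ the relevant Jacobi/holonomy fields along a horizontal geodesic are \emph{bounded} (this is exactly where nonnegative curvature enters: a holonomy field that grows is incompatible with $\sec\ge0$, a Rauch/Cheeger–Gromoll type argument, cf. the boundedness of $H_p(\cal F)$ exploited in section~\ref{sec:bounded}), one shows the vector field $t\mapsto\hat\gamma(t)^{-1}(A_{X(t)}Y(t))$ stays inside a fixed finite-dimensional subspace of $\cal V_p$, and that this subspace is spanned by values of the form $A_{X'}Y'$ at $p$. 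The transition from a single geodesic to an arbitrary horizontal curve $c$ is routine: approximate $c$ by broken horizontal geodesics and use that $\hat c(1)^{-1}$ is multiplicative under concatenation, together with the fact that the span $\cal A_p$ is invariant.

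The heart of the argument, and what I expect to be the main obstacle, is proving the invariance statement: $\hat\gamma(t)^{-1}\big(A_{X}Y\big)\in\cal A_p=\operatorname{span}\{A_{X'}Y'~|~X',Y'\in\cal H_p\}$ for all $t$, not merely that it lies in some a~priori larger holonomy-invariant subspace. For this I would differentiate $f(t)=\hat\gamma(t)^{-1}(A_{X(t)}Y(t))$ in $t$. The derivative splits into (i) the term coming from $\nabla_{\dot\gamma}A$, which by the totally-geodesic O'Neill equations is expressible through curvature terms $R(\dot\gamma,\cdot)\cdot$ and through other $A$-values and lower-order holonomy data, and (ii) the term coming from $\frac{d}{dt}\hat\gamma(t)^{-1}$, which by definition of holonomy fields and the totally-geodesic hypothesis is given by the tensor $A^*_{\dot\gamma}$ (the adjoint of $A_{\dot\gamma}\co\cal V\to\cal H$) plus the shape-operator/$S$-terms, all of which preserve verticality. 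The nonnegative curvature hypothesis is then invoked to close the system: it forces the ``dangerous'' Jacobi modes to vanish (an $A$-value that would be pushed outside $\cal A_p$ under holonomy generates a horizontal geodesic variation producing a conjugate-type contradiction with $\sec\ge0$, in the spirit of Theorem~\ref{thm:AS}'s proof via \cite{clarke2012holonomy} but sharpened as in Wilking's work on dual foliations). Making this ODE analysis rigorous — identifying exactly which bundle the vector $f(t)$ is forced to live in, and checking that that bundle's fiber at $p$ is precisely $\cal A_p$ and not a priori bigger — is the delicate point; everything else (the reduction to geodesics, the concatenation argument, the $\supseteq$ inclusion) is bookkeeping built on Theorem~\ref{thm:AS}.
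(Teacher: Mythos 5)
Your reduction is the right one: by Theorem \ref{thm:AS} the whole content is to show that $\lie a_p=\operatorname{span}\{A_XY~|~X,Y\in\cal H_p\}$ is invariant under the infinitesimal holonomy transformations $\hat c(1)$. But at what you yourself call the heart of the argument there is a genuine gap: you never supply the mechanism by which $\sec\geq 0$ forces this invariance, and the hints you give point in the wrong direction. Boundedness of holonomy fields is not where curvature enters --- for a totally geodesic foliation holonomy transformations are isometries between leaves, so holonomy fields have constant length with no curvature hypothesis at all, and this boundedness by itself says nothing about where $\hat c(1)^{-1}(A_XY)$ lands. Likewise the appeal to a ``conjugate-type contradiction'' is never made precise, and as stated it could at best place $\hat c(1)^{-1}(A_XY)$ in some holonomy-invariant subspace containing $\lie a_p$, not in $\lie a_p$ itself --- which is exactly the delicacy you flag but do not resolve.

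The paper closes this gap by a quite specific quantitative device that your sketch is missing. O'Neill's formula for the unreduced curvature of a mixed plane in a totally geodesic foliation gives $K(X,\xi+tZ)=t^2K(X,Z)+2t\lr{(\nabla_XA)_XZ,\xi}+\|A^\xi X\|^2$, and nonnegativity of this quadratic in $t$ forces its discriminant to be nonpositive, yielding the pointwise estimate $|\lr{(\nabla_XA^\xi)X,Z}|\leq \tau\|X\|\,\|Z\|\,\|A^\xi X\|$ (Lemma \ref{lem:WNN}). Feeding this into a Gronwall inequality for $u(t)=\|A^{\xi(t)}\dot c(t)\|^2$ along a horizontal geodesic shows that $A^{\xi_0}X_0=0$ propagates (Proposition \ref{prop:A-flatgeo}), and a refinement via eigenvalues of $D=-A^\xi A^\xi$ shows the rank of $A^{\xi(t)}$ is constant along horizontal geodesics (Proposition \ref{prop:Dconstantrank}). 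The decisive trick is then to work not with $t\mapsto\hat\gamma(t)^{-1}(A_{X(t)}Y(t))$, as you propose, but with the annihilator $\lie a_p^\bot=\{\xi\in\cal V_p~|~A^\xi=0\}$: the rank-constancy shows $\hat c(1)(\lie a_p^\bot)=\lie a_{c(1)}^\bot$, and since $\hat c(1)$ is an isometry (totally geodesic leaves) this dualizes to $\hat c(1)(\lie a_p)=\lie a_{c(1)}$, after which Theorem \ref{thm:AS} finishes. Without the discriminant estimate and the passage to the annihilator, your ODE for $f(t)$ has no mechanism to keep its solution inside $\lie a_p$, so the proposal as written does not prove the theorem.
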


Theorem \ref{thm:A} is used twice: to obtain a local version of the splitting theorem \cite[Corollary 3.3]{lytchak2014polar} and as one of the last steps in the paper.

When we are in the scope of Theorem  \ref{thm:groveF} (i.e., $\cal F$ is a totally geodesic Riemannian foliation on a neighborhood of  a Lie group with bi-invariant metric), Ranjan \cite{ranjan} makes a key observation: let $\I$ be the identity of $G$. For every $\xi\in\cal V_\I$, $X\in\cal H_{\I}$, the Grey--O'Neill's formulas imply: 
\begin{equation}\label{eq:Ranjan}
(A^\xi)^2=(\h\ad_\xi)^2.
\end{equation}
By using it, \cite{ranjan} proves Conjecture \ref{conj:grove} for simple groups that have a maximal torus inside a leaf. Such torus provides a decomposition of the basic horizontal fields, producing candidates for $\lie g_\pm$. Then the simplicity of the group is used to prove that either $\lie g_+$ or $\lie g_-$ is trivial.
Without the maximal torus assumption, we introduce  a new root system based on the integrability tensor of $\cal F$. 
\begin{theorem}\label{thm:tori}
	Let $\cal F$ be a Riemannian foliation with totally geodesic leaves on a manifold $M$. Let $L_p$ be the leaf through $p\in M$ and assume that a neighborhood of 0 in a subspace $\lie t^v\subseteq \cal V_p$ exponentiates to a totally geodesic flat. Suppose  that one of the hypothesis hold:
	\begin{enumerate}[$(a)$]
		\item $\exp(\lie t^v)$ is a complete totally geodesic flat and $A$ is bounded;
		\item $L_\I$ is the open neighborhood of a compact symmetric space without Euclidean factors.
	\end{enumerate}
Then, $R(\eta,\xi)^h=A^\eta A^\xi-A^\xi A^\eta$ for all $\xi,\eta\in \lie t^v$. 
\end{theorem}

Equation \eqref{eq:Ranjan} together with Theorem \ref{thm:tori} readily gives a decomposition $X=X_++X_-$, producing spaces $\cal H_+(\lie t^v)+\cal H_-(\lie t^v)\supseteq \cal H_\I$. The Lie algebras $\lie g_+,\lie g_-$ are the ideals generated by $\cal H_+(\lie t^v),\cal H_-(\lie t^v)$. The bulk of the paper is to prove that these ideals commute with each other. To this aid, we build
upon  Munteanu--Tapp \cite[Theorem 1.5]{tappmunteanu2} (Theorem \ref{thm:tapp} below), which provides Lie algebraic relations between the original root system and the one in Theorem \ref{thm:tori}:

A triple $\{X,V,\cal A\}\subseteq T_pM$ is called a \textit{good triple} if $\exp_p (tV(s))=\exp_p(sX(t))$ for all $s,t\in\bb R$, where $V(s),X(t)$ denote the Jacobi fields along $\exp(sX)$ and $\exp(tV)$ that satisfy $V(0)=V$, $X(0)=X$ and $V'(0)=\cal A=X'(0)$, respectively. Such conditions are achieved in  totally geodesic Riemannian foliations by $X\in\cal H$, $V\in \cal V$ and $\cal A=A^VX$. \cite[Theorem 1.5]{tappmunteanu2} provides a key identity that is used in section \ref{sec:good}:
\begin{theorem}[Theorem 1.5, \cite{tappmunteanu2}]\label{thm:tapp}
	Let $G$ be a compact Lie group with a bi-invariant metric and denote its Lie algebra by $\lie g$. The triple $\{X,V,A\}\subseteq \lie g$ is good if and only if, for all integers $n,m\geq 0$, 
	\begin{equation*}
	[\ad_X^nB,\ad_V^m\bar B]=0,
	\end{equation*}
	where $B=\frac{1}{2}\ad_VX-A$ and $\bar B= -\frac{1}{2}\ad_VX-A$.
\end{theorem}

Once proved that $\lie g_+$ commutes with $\lie g_-$, it follows from \cite[Proposition 4.1, Corollary 4.2]{tappmunteanu2} that the foliation is given by cosets: on the one hand
\begin{prop}[Munteanu--Tapp \cite{tappmunteanu2}, Proposition 4.1]\label{prop:MunteanuTapp}
Let $\cal F_1,\cal F_2$ be Riemannian foliations with totally geodesic leaves on $M$. Suppose that their vertical spaces, together with their integrability tensors coincide at a point $\I$. Then $\cal F_1=\cal F_2$. 
\end{prop}

On the other hand, assume that $M$ is a (open) subset of the product Lie group $G=G_1\times G_2$, equipped with a bi-invariant metric. Consider a subgroup $H<G_1\times G_2$ and define $\cal F_H$ as: 
\begin{equation}\label{eq:example_product}
L_{(g_1,g_2)}=\{(h_1g_1,g_2h_2^{-1})~|~(h_1,h_2)\in H \}.  
\end{equation}
Then,
\begin{equation}\label{eq:Asplit}
A^{(\xi_1,\xi_2)}(X_1,X_2)= (\h \ad_{\xi_1} X_1, -\h\ad_{\xi_2}X_2 ).
\end{equation}
Observe that inverting  the second coordinate on $G_1\times G_2$ is an isometry and interchanges $\mathcal F$ to a Riemannian foliation whose $A$-tensor satisfy $A^\xi X=\h\ad_\xi X$, for all $\xi\in \cal V_\I,X\in\cal H_\I$.

\begin{cor}[Munteanu--Tapp \cite{tappmunteanu2}, Corollary 4.2]\label{cor:TappMunteanu} 
	Let $\mathcal F$ be a Riemannian foliation with totally geodesic leaves on a connected Lie group $G$ with bi-invariant metric. If $A^\xi X=\h\ad_\xi X$, for all $\xi\in \cal V_\I,X\in\cal H_\I$, then $\cal V_\I$ is a subalgebra and $\cal F$ is the foliation defined by the left cosets of the subgroup  whose subalgebra is $\cal V_\I$.
\end{cor}

Although \cite[Proposition 4.1]{tappmunteanu2} assumes completeness of $M$, its proof can be carried out as far as every two points of $M$ can be joined by the concatenation of vertical and horizontal geodesics.
Theorem \ref{thm:groveF} follows by putting together \eqref{eq:Aintroduction}, \eqref{eq:Asplit} and Corollary \eqref{cor:TappMunteanu}. 

The paper is divided as follow: in section \ref{ap:AS} we prove Theorem \ref{thm:A} and use it to reduce Theorem \ref{thm:groveF} to the case of an irreducible foliation. Section \ref{sec:thmB} deals with the proof of Theorem \ref{thm:tori} and presents the splitting $\cal V=\Delta_0\oplus\Delta_1$. The proof of Theorems \ref{thm:grove}, \ref{thm:groveF} are completed in section \ref{sec:good}.

\mbox{}

The author would like to thank C. Dur\'an, K. Shankar and K. Tapp for suggestions and insightful conversations. Specially K. Shankar for pointing out \cite{berestovskii-nikonorov} (which was a crucial reference for an earlier version of the paper). The author also would like to thank Miguel Dom\'ingues V\'azquez and the anonymous referee for many suggestions, and Universidade Federal do Paran\'a for hosting the author for most part of this work. 

\section{An Ambrose-Singer theorem for non-negatively curved foliations}\label{ap:AS} 

Let $\pi\co M\to B$ be a Riemannian submersion. For simplicity we assume that all submersions and foliations here have totally geodesic fibers/leaves. Define 
\begin{equation*}\label{eq:dualleaf}
L^\#_p=\{c(1)\in M~|~c\co [0,1]\to M~\text{horizontal},~c(0)=p\}.
\end{equation*} 
We recall that, given a curve $\tilde c\co [0,1]\to B$, its horizontal lifts define the \textit{holonomy diffeomorphism} $\phi_{\tilde c}\co \pi^{-1}(\tilde c(0))\to \pi^{-1}(\tilde c(1))$ (by sending a point $q\in \pi^{-1}(\tilde c(0))$ to the endpoint of the horizontal lift of $\tilde c$ starting at $q$). When $\cal F$ is a Riemannian foliation, one can still define local diffeomorphisms, since $\cal F$ is locally given by submersions, therefore their differentials define holonomy fields. Thus, given a horizontal curve $c\co [0,1]\to M$ and $\xi\in \cal V_{c(0)}$, we denote $d\phi_c(\xi)=\xi(1)$, where $\xi(t)$ is the holonomy field defined by $\xi$, i.e., it satisfies:
\begin{gather}\label{key}
\nabla_{\dot c} \xi(t)=A^\xi \dot c,\\ \xi(0)=\xi.\nonumber
\end{gather}

When $\pi$ is a principal $H$-bundle $L^\#_p\cap \pi^{-1}(b)$ coincides with an orbit of the holonomy group of $\pi$ at $b$. In this case, the Ambrose--Singer Theorem \cite{ambrose-singer} characterizes the Lie algebra of the holonomy group through the connection 2-form $\Omega$.
The result naturally extends to the case of a (not necessarily principal) Riemannian foliation (as one can see from the proof in \cite[section 3.4.2]{clarke2012holonomy}):

\begin{theorem}[Ambrose--Singer \cite{ambrose-singer}, Theorem 2]
	\label{thm:AS}
	Let $\cal F$ be a Riemannian foliation and $L_p^\#$  the dual leaf at $p\in M$. Then,
	\begin{equation*}
	T_p L_{p}^\#\cap \cal V_p=span\{d\phi_c^{-1}(A_XY)~|~c \text{ horizontal},~c(0)=p,~X,Y\in\cal H \}.
	\end{equation*}
\end{theorem}	

Although the Theorem gives a semi-local characterization for $TL_p^\#$, one must understand the behavior of the holonomy fields and of the $A$-tensor, which might be quite arbitrary objects. The situation can be greatly improved when the ambient space has non-negative sectional curvature. 

\begingroup
\def\thetheorem{\ref{thm:A}}
\begin{theorem}
	Let $\cal F$ be a totally geodesic Riemannian foliation on a manifold $M$ of non-negative sectional curvature.  Let $L^\#_p$ be the dual leaf of $\cal F$ at $p$. Then
	\begin{equation*}
	TL^\#_p\cap \cal V_p=span\{A_XY~|~X,Y\in \cal H_p\}.
	\end{equation*}
\end{theorem}
\addtocounter{theorem}{-1}
\endgroup

Under such hypothesis,  a vector in the cokernel of $A_X$ never leaves it (see Lemma \ref{prop:A-flatgeo}).  Observe that the result is absolutely local since there is no hypothesis on the completeness of $M$. Moreover, punctual information of $A$ spreads out through $M$: for instance, by combining Theorems \ref{thm:AS} and \ref{thm:A}, one concludes that  $A=0$ at  $p$ if and only if  $L^\#_p$ is a \textit{polar section} for $\cal F$ (i.e., $L^\#_p$ intersects every leaf perpendicularly). In this case, one can show that the universal cover $\tilde M$ metrically splits as $\tilde M=\tilde L^\#_p\times \tilde L_p$ (see  \cite[Theorem 1.4.1]{gw}).

Now we proceed to the proof of Theorem \ref{thm:A}. As in the introduction, denote $A^\xi\co \cal H_q\to \cal H_q$ as 
\[\lr{A^\xi X,Z}=-\lr{A_XZ,\xi}. \]
Denote $(\nabla_X A)^\xi Z=\nabla_X(A^\xi Z)-A^{\nabla_X^v\xi}Z-A^\xi \nabla_X^h Z$. By extending $\xi$  as a holonomy field,  one sees that:
\[\lr{(\nabla_X A)_XZ,\xi}=-\lr{(\nabla_X A)^\xi X,Z}.\qedhere\] 

In the remaining of the section, we assume $\cal F$ with totally geodesic leaves and $M$ with non-negative sectional curvature. Theorem \ref{thm:A} is based on the next inequality.

\begin{lem}\label{lem:WNN}
	For each $p\in M$ there is a neighborhood $U$ and a constant $a>0$ such that
	\begin{gather}\label{eq:nn}
	a\|X\|\|Z\|\|A^\xi X\|\geq |\lr{(\nabla_XA)^\xi X,Z}|
	\end{gather}
	for all $X,Z\in\cal H_q$ and $\xi\in\cal V_q$, $q\in U$.\end{lem}
\begin{proof}
	Given $X,Z\in\cal H$ and $\xi\in\cal V$,   Gray--O'Neill equations (\cite[page 44]{gw}) states  that the unreduced sectional curvature $K(X,\xi+tZ)=R(X,\xi+tZ,\xi+tZ,X)$ satisfies 
	\begin{equation}\label{eq:quadratictrick}
	K(X,\xi+tZ)=t^2K(X,Z)+2t\lr{(\nabla_XA)_XZ,\xi}+\|A^\xi X\|^2.
	\end{equation}
	Since  $K(X,\xi+tZ)\geq 0$, the discriminant of the polynomial \eqref{eq:quadratictrick} satisfies
	\begin{equation*}
	0\leq K(X,Z)\|A^\xi X\|^2-\lr{(\nabla_XA)_XZ,\xi}^2.
	\end{equation*}
	On small neighborhoods, continuity of $K$ guarantees the existence of some $a>0$ such that $K(X,Z)\leq a\|X\|^2\|Z\|^2$. 
\end{proof}

\begin{lem}\label{prop:A-flatgeo}
	Let $\xi(t)$ be a holonomy field along  $\gamma(t)=\exp(tX)$, $X\in\cal H$. If $A^{\xi(0)}{X}=0$ then $A^{\xi(t)}{\dot \gamma(t)}=0$ for all $t$.
\end{lem}
\begin{proof}
	Take $\|X\|=1$ and  $Z=A^{\xi}{\dot \gamma}$ in \eqref{eq:nn}. Recalling that $\nabla_{\dot \gamma}^v\xi=0$, we get
	\begin{align}\label{proof:flatgeo1}
	a\|A^\xi{\dot \gamma}\|^2&\geq \lr{(\nabla_{\dot \gamma}A)^\xi{\dot \gamma},A^\xi {\dot \gamma}}=\lr{\nabla_{\dot \gamma}(A^\xi{\dot \gamma}),A^\xi {\dot \gamma}}=\frac{1}{2}\frac{d}{dt}\|A^\xi {\dot \gamma}\|^2.
	\end{align}
	Equation \eqref{proof:flatgeo1} is  Gronwall's inequality for $u(t)=\|A^{\xi(t)}{\dot c(t)}\|^2$, implying that
	\begin{equation*}\label{proof:flatgeo}
	\|A^\xi(t){\dot \gamma(t)}\|^2\leq \|A^{\xi(0)}{\dot \gamma(0)}\|^2e^{2 a t}
	\end{equation*}
	for all $t>0$. In particular, if $A^{\xi(0)}{\dot \gamma(0)}=0$, $A^{\xi(t)}{\dot \gamma(t)}=0$. Analogously, $A^{\xi(t)}{\dot \gamma(t)}=0$ for $t<0$ by  replacing  $X$ by $-X$ in the argument.
\end{proof}

The main ingredient in the proof is the constancy of the rank of  $\ker (A^{\xi(t)}\co\cal H_{c(t)}\to \cal H_{c(t)})$ along $\exp(\ker A^\xi)$ (Proposition \ref{prop:Dconstantrank}). First we present two  algebraic lemmas.

\begin{lem}\label{lem:in1}
	Let $X,Y\in\cal H$ be orthonormal and $A^{\xi}X=0$. Then,
	\begin{gather*}\label{eq:eigen0}
	{2 a}\|A^\xi Y\|^2\geq \lr{(\nabla_XA)^\xi Y+(\nabla_YA)^\xi X,A^\xi Y}.
	\end{gather*}
\end{lem}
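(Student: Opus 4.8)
The plan is to exploit the same quadratic-discriminant trick as in Lemma \ref{lem:WNN}, but now applied to a carefully chosen two-parameter family of horizontal vectors rather than the one-parameter family $\xi+tZ$. Since $A^\xi X = 0$, O'Neill's formula \eqref{eq:quadratictrick} for the pair $(X,\xi)$ degenerates, and I would like to extract a second-order consequence. Concretely, I would look at the unreduced sectional curvature $K(X + sY, \xi + tZ)$ for a suitable horizontal $Z$, expand it as a polynomial in $s$ and $t$, and use non-negativity of curvature to bound the relevant mixed coefficients.

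First I would record O'Neill's general identity in the form $K(W,\eta) = \|A^\eta W\|^2 + 2\langle (\nabla_W A)_W\,\cdot\,,\eta\rangle$-type expansion applied to $W = X+sY$ and $\eta = \xi$; because $A^\xi X = 0$, the expansion in $s$ starts with $K(X+sY,\xi) = s^2\|A^\xi Y\|^2 + (\text{curvature terms}) + 2s\langle (\nabla_X A^\xi)Y + (\nabla_Y A^\xi)X,\, A^\xi X\rangle + \dots$; the point is that the coefficient I want, $\langle (\nabla_X A^\xi)Y + (\nabla_Y A^\xi)X, A^\xi Y\rangle$, appears at order $st$ when I simultaneously perturb $\xi$ to $\xi + tZ$ with the choice $Z = A^\xi Y$. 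So the real computation is: write out $K(X+sY,\xi+tZ)\ge 0$, collect it as a quadratic form in $(s,t)$ whose $s^2$-coefficient is $\|A^\xi Y\|^2$ (the $\|A^{\xi}X\|^2$ term vanishes), whose $t^2$-coefficient is controlled by sectional curvature $K(X,Z)\le \tau\|X\|^2\|Z\|^2$ as in Lemma \ref{lem:WNN}, and whose $st$-coefficient contains precisely the left-hand side of \eqref{eq:eigen0} up to the curvature normalization. Then non-negativity of the quadratic form (its determinant $\ge 0$, evaluating at the appropriate ratio $s/t$, or more simply choosing $s = -t$ and using that the whole expression is $\ge 0$) yields the stated inequality after bounding the pure-curvature terms by $\tau$ on a small neighborhood.

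A cleaner route, which I would actually write, is to differentiate Proposition \ref{prop:A-flatgeo}'s inequality in the transverse direction: consider the geodesic variation $c_s(t) = \exp(t(X + sY))$ with holonomy fields $\xi_s(t)$ extending $\xi$, apply \eqref{proof:flatgeo1} along each $c_s$, and differentiate the resulting Gronwall bound at $s=0$; since $u(0,t) = \|A^{\xi(t)}\dot c(t)\|^2 \equiv 0$ by Proposition \ref{prop:A-flatgeo}, the first $s$-derivative of $u$ also vanishes and one is left comparing the second-order data, which reproduces \eqref{eq:eigen0} with the factor $2\tau$. Either way the substance is the same bilinear expansion of O'Neill's curvature identity; I would present whichever bookkeeping is shorter.

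The main obstacle I anticipate is purely organizational: keeping track of which terms in the $s,t$-expansion of $K(X+sY,\xi+tZ)$ are ``pure curvature'' (hence bounded by $\tau\|X\|^2\|Y\|^2$ after shrinking the neighborhood, using compactness and continuity of $R$) versus which are the $\nabla A$-terms I want to isolate, and making sure the symmetrization $(\nabla_X A^\xi)Y + (\nabla_Y A^\xi)X$ emerges with the right sign and coefficient. The identity $\langle (\nabla_X A)_Z Y,\xi\rangle = -\langle (\nabla_X A^\xi) Z, Y\rangle$ along a holonomy field (already used at the end of Lemma \ref{lem:WNN}) will be needed again here to convert the curvature-derivative terms into $A^\xi$-derivative terms; once that substitution is in place and the orthonormality $\|X\| = \|Y\| = 1$, $\langle X,Y\rangle = 0$ is used, the inequality drops out with $2\tau$ on the left as claimed.
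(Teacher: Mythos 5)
Your proposal is correct and is in substance the paper's own argument: the paper obtains \eqref{eq:eigen0} by polarizing the already-established inequality \eqref{eq:nn}, i.e.\ applying Lemma \ref{lem:WNN} with $X$ replaced by $X+Y$ and by $Y-X$ (with $Z=A^\xi Y$), then summing and using $A^\xi(X+Y)=A^\xi(Y-X)=A^\xi Y$ --- exactly the $s=\pm 1$ specialization of your two-parameter expansion of $K(X+sY,\xi+tZ)$. The bookkeeping you worry about disappears in this shortcut, since only the mixed terms $\lr{(\nabla_XA^\xi)Y+(\nabla_YA^\xi)X,A^\xi Y}$ survive the summation, and the precise constant multiplying $\tau$ is immaterial for the subsequent Gronwall-type use in Proposition \ref{prop:Dconstantrank}.
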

\begin{proof}
	For any unitary $Z\in\cal H$, \eqref{eq:nn} gives:
	\begin{align*}
	2 a \|Z\|\|A^\xi {(Y+X)}\|&\geq \lr{(\nabla_XA)^\xi X+(\nabla_XA)^\xi Y+(\nabla_YA)^\xi X+(\nabla_YA)^\xi Y,Z},\\
	2 a \|Z\|\|A^\xi{(Y-X)}\|&\geq -\lr{(\nabla_XA)^\xi X-(\nabla_XA)^\xi Y-(\nabla_YA)^\xi X+(\nabla_YA)^\xi Y,Z}.
	\end{align*}
	Observe that $A^\xi{(X+Y)}=A^\xi{(Y-X)}=A^\xi Y$ and take $Z=A^\xi Y$. The result now follows by summing up both inequalities.
\end{proof}

Consider the non-negative symmetric operator $D=-A^\xi A^\xi$ and recall that $\ker A^\xi=\ker D$. Given $DY=\lambda^2 Y$, $\lambda >0$, define $\bar Y=\lambda ^{-1}A^\xi Y$. We have $\|\bar Y\|=\|Y\|$ and $A^\xi \bar Y=-\lambda Y$. In particular, if $\|Y\|=1$, $\|DY\|=\lambda^2$ and $\|A^\xi Y\|=\|A^\xi \bar Y\|=\lambda$.

\begin{lem}\label{lem:in2}
	Let $X,Y$ be unitary horizontals satisfying $A^\xi X=0$ and $DY=\lambda^2 Y\neq 0$. Then,
	\begin{equation*}
	\lr{(\nabla_YA)^\xi X,A^\xi Y}+\lr{(\nabla_{\bar Y}A)^\xi X,A^\xi \bar Y}=\lr{(\nabla_XA)^\xi \bar Y ,A^\xi \bar Y}.
	\end{equation*}
\end{lem}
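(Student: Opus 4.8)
The strategy is to exploit the quadratic trick of Lemma \ref{lem:WNN} one more time, but now applied to a cleverly chosen horizontal vector built from $X$, $Y$ and $\bar Y=\lambda^{-1}A^\xi Y$. Recall that $A^\xi$ is skew-symmetric as an endomorphism of $\cal H$ (this follows from its definition), so $A^\xi\bar Y=-\lambda Y$, $A^\xi X=0$, and $\{X,Y,\bar Y\}$ can be chosen orthonormal (the eigenspaces of $D=-A^\xi A^\xi$ attached to $\lambda^2\ne 0$ come in pairs $Y,\bar Y$, both orthogonal to $\ker A^\xi\ni X$). First I would record the basic bilinear identity that makes everything work: since $\lr{(\nabla_W A^\xi)U,V}$ is symmetric in $U,V$ (differentiate $\lr{A^\xi U,V}=-\lr{U,A^\xi V}$ and use that $A^\xi$ is the only term that survives, as $\nabla_W\xi$ contributes a holonomy-field correction that is handled exactly as at the end of the proof of Lemma \ref{lem:WNN}), the inequality \eqref{eq:nn} can be polarized freely among the three vectors.

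The key computation is to apply \eqref{eq:nn} with the roles of $X,Z,\xi$ played by suitable combinations. Concretely, I would feed \eqref{eq:nn} the pair obtained by replacing $X$ with $X\pm Y$ and $Z$ with $A^\xi\bar Y$ (or, symmetrically, replacing $X$ with $X\pm\bar Y$ and $Z$ with $A^\xi Y$), exactly as in the proof of Lemma \ref{lem:in1}, and then sum the two inequalities so that the quadratic-in-the-perturbation terms cancel and only the cross terms $\lr{(\nabla_X A^\xi)Y + (\nabla_Y A^\xi)X,\,A^\xi\bar Y}$ survive on the right. Using $A^\xi Y=\lambda\bar Y$ and $A^\xi\bar Y=-\lambda Y$ to rewrite $A^\xi$-images in terms of $Y,\bar Y$, and using the symmetry $\lr{(\nabla_Y A^\xi)X,A^\xi\bar Y}=\lr{(\nabla_{\bar Y}A^\xi)X,A^\xi Y}$-type identities together with $\lr{(\nabla_X A^\xi)Y,\bar Y}+\lr{(\nabla_X A^\xi)\bar Y,Y}=X\lr{A^\xi Y,\bar Y}$ and the skew-symmetry that forces $\lr{A^\xi Y,\bar Y}=\lambda\lr{\bar Y,\bar Y}$ to have controlled derivative, the three surviving terms rearrange into the claimed equality. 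The $\tau$-dependent right-hand sides of the two summed inequalities must be shown to cancel or vanish in the limit; the cleanest route is to run the argument along the vertical geodesic / with the scaling normalization $\|X\|=\|Y\|=1$ so the $\tau\|X\|\|Z\|\|A^\xi X\|$ term carries a factor $\|A^\xi X\|=0$, killing it outright — this is the same mechanism that made Proposition \ref{prop:A-flatgeo} work.

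The main obstacle I anticipate is bookkeeping: keeping straight which of $Y,\bar Y$ appears where after all the substitutions, and making sure every instance of $A^\xi$ hitting an eigenvector is replaced consistently (sign errors in $A^\xi\bar Y=-\lambda Y$ versus $A^\xi Y=+\lambda\bar Y$ are the obvious trap), and verifying that the covariant derivative $\nabla$ commutes past these substitutions correctly given that $Y,\bar Y$ are not parallel along the curve — one must either work at a single point (so $\nabla Y$, $\nabla\bar Y$ appear as genuine extra terms that need to be shown to cancel) or choose the local extensions of $Y,\bar Y$ so that their derivative contributions pair off. I expect the symmetry $\lr{(\nabla_W A^\xi)U,V}=\lr{(\nabla_W A^\xi)V,U}$, applied repeatedly, to absorb all such extra terms; verifying this carefully is the technical heart. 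Once the cancellations are in place the identity drops out, and this lemma then feeds into the proof that $\cal D(t)=\ker A^\xi$ has constant rank (Proposition \ref{prop:Dconstantrank}), which in turn drives Theorem \ref{thm:A}.
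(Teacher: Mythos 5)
Your plan has a genuine gap: Lemma \ref{lem:in2} is an exact identity, and polarizing the nonnegativity inequality \eqref{eq:nn} can never produce it. When you replace the first slot by $X\pm Y$ (or $X\pm\bar Y$) and sum, the left-hand sides are $\tau\|X\pm Y\|\,\|Z\|\,\|A^\xi(X\pm Y)\|$ with $A^\xi(X\pm Y)=\pm\lambda\bar Y\neq 0$, so the $\tau$-term does \emph{not} carry the factor $\|A^\xi X\|=0$ you are counting on; that factor only kills the term when the first slot is exactly $X$, which merely gives $(\nabla_XA^\xi)X=0$. What your summation actually yields is precisely Lemma \ref{lem:in1}, i.e.\ an estimate with a surviving $\tau\lambda$-term, not an equality. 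The ingredient your proposal is missing is the exact algebraic input the paper uses: the cyclic (Bianchi-type) identity for $\nabla A$, namely Lemma 1.5.1 of Gromoll--Walschap, which in dualized form reads
\begin{equation*}
\lr{(\nabla_YA^\xi)X,\bar Y}=-\lr{(\nabla_XA^\xi)\bar Y,Y}-\lr{(\nabla_{\bar Y}A^\xi)Y,X}.
\end{equation*}
The paper's proof is a two-line computation: multiply this identity by $\lambda$ and substitute $A^\xi Y=\lambda\bar Y$, $A^\xi\bar Y=-\lambda Y$. No curvature inequality enters at this stage.

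A second concrete error: you assert that $\lr{(\nabla_W A^\xi)U,V}$ is symmetric in $U,V$. It is \emph{antisymmetric}: $\lr{A^\xi U,V}=-\lr{U,A^\xi V}$ (equivalently $\lr{(\nabla_WA)_UV,\xi}$ is skew in $U,V$ because $A_UV=-A_VU$), and differentiating preserves this skewness; in the paper's convention (with $\xi$ a holonomy field along a horizontal geodesic, $S\equiv0$, so $\nabla_{\dot c}\xi$ is horizontal and drops out against the vertical $A_UV$), one gets $\lr{(\nabla_XA)_UV,\xi}=-\lr{(\nabla_XA^\xi)U,V}$, skew in $U,V$. It is exactly this antisymmetry — not the symmetry you invoke — that converts $-\lambda\lr{(\nabla_{\bar Y}A^\xi)Y,X}$ into $-\lr{(\nabla_{\bar Y}A^\xi)X,A^\xi\bar Y}$ in the paper's final step. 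So your mechanism for "absorbing the extra terms" is based on a false identity, and even with the sign corrected, the inequality route cannot close the argument without the cyclic identity above.
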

\begin{proof}
	By combining the Bianch identity of $R^v(X,Y)Z$ and Grey--O'Neill's equation $R^v(X,Y)Z=-(\nabla^v_ZA)_XY$ we get (see also Lemma 1.5.1 in \cite[page 26]{gw}): 
	\begin{equation*}
	\lr{(\nabla_YA)^\xi X,\bar Y}=-\lr{(\nabla_XA)^\xi \bar Y,Y}-\lr{(\nabla_{\bar Y}A)^\xi Y,X}.
	\end{equation*}
	By replacing $A^\xi Y=\lambda\bar Y$ and $A^\xi\bar Y=-\lambda Y$, we have: 
	\begin{align*}
	\lr{(\nabla_YA)^\xi X,A^\xi Y}=&\lambda\lr{(\nabla_YA)^\xi X,\bar Y}=-\lambda[\lr{(\nabla_XA)^\xi \bar Y,Y}+\lr{(\nabla_{\bar Y}A)^\xi Y,X}]\\
	=&\lr{(\nabla_XA)^\xi \bar Y,A^\xi\bar Y}-\lr{(\nabla_{\bar Y}A)^\xi X,A^\xi\bar Y}.\qedhere  
	\end{align*}
\end{proof}

\begin{prop}\label{prop:Dconstantrank}  Let $\xi(t)$ be a holonomy field along $\gamma(t)=\exp(t X_0)$, $X_0\in \cal H_p$, and suppose that  $A^{\xi(0)} X_0=0$. If $\lambda(t)^2$ is a continuous eigenvalue of $D=-A^{\xi(t)}A^{\xi(t)}$, then $\lambda(t)$ either vanishes identically  or it never vanishes. 
\end{prop}
\begin{proof}
	We argue by contradiction. Assume that $\lambda$ vanishes at $t=0$  but there is $l>0$ such that $\lambda(t)>0$ for all $t\in (0,l)$. We further assume (by possibly reducing $l$) that $D$ has a smooth frame of eigenvectors along $\gamma((0,l))$.
	The Proposition follows from Gronwall's inequality once we prove that
	\begin{equation}\label{eq:eigen}
	{ a'}\lambda^2\geq \frac{d}{dt}\lambda^2,
	\end{equation}
	for some $a'>0$.
	In particular,  $\lambda(t)^2\leq \lambda(\epsilon)^2e^{ a' t}$ for all $\epsilon\in (0,l)$, $t\in (\epsilon,l)$. Thus, $\lambda$ must vanish on $(0,l)$, a contradiction.
	
	Inequality \eqref{eq:eigen} follows from Lemmas \ref{lem:in1} and \ref{lem:in2}: let $Y$ be a smooth unitary vector field satisfying $DY=\lambda^2Y$. Applying Lemma \ref{lem:in1} on both $Y$ and $\bar Y$ gives
	\begin{align*}
	2 a\lambda^2&\geq \lr{(\nabla_XA)^\xi Y+(\nabla_YA)^\xi X,A^\xi Y},\\
	2{ a}\lambda^2&\geq \lr{(\nabla_XA)^\xi \bar Y+(\nabla_{\bar Y}A)^\xi X,A^\xi \bar Y}.
	\end{align*}
	Summing up gives:
	\begin{align*}
	4{ a}\lambda^2 &\geq \lr{(\nabla_XA)^\xi Y,A^\xi Y}+\lr{(\nabla_XA)^\xi \bar Y,A^\xi \bar Y}\\&+\lr{(\nabla_YA)^\xi X,A^\xi Y}+\lr{(\nabla_{\bar Y}A)^\xi X,A^\xi \bar Y}.\nonumber 
	\end{align*}
	Applying Lemma \ref{lem:in2}, we have 
	$\lr{(\nabla_YA)^\xi X,A^\xi Y}+\lr{(\nabla_{\bar Y}A)^\xi X,A^\xi \bar Y}=\lr{(\nabla_XA)^\xi \bar Y ,A^\xi \bar Y}$. On the other hand,
	\begin{align*}\label{proof:eigen1}
	\lr{(\nabla_XA)^\xi  \bar Y ,A^\xi  \bar Y} =&\lr{\nabla_X(A^\xi  \bar Y) ,A^\xi  \bar Y}-\lr{A^\xi(\nabla_X \bar Y),A^\xi  \bar Y} \\=&\h\frac{d}{dt}\lambda^2-\lambda^2\lr{\nabla_X \bar Y, \bar Y}=\h\frac{d}{dt}\lambda^2.
	\end{align*}
	Analogously, $\lr{(\nabla_XA)^\xi   Y ,A^\xi   Y}=\h\frac{d}{dt}\lambda^2$. Thus we can take $a'=\frac{8}{3}a$, concluding the proof.
\end{proof}

As the last step, fix $p\in M$ and denote  $\lie a_q=span\{A_XY~|~X,Y\in\cal H_q\}$. Observe that $\lie a_q^\bot=\{\xi\in\cal V_q~|~A^\xi =0\}$ and recall that a horizontal curve can be smoothly approximated by a broken horizontal geodesic. Then, Proposition \ref{prop:Dconstantrank} gives:

\begin{cor}\label{claim:2}
	For any  curve  $c$, $d\phi_c(\lie a_p^\bot)=\lie a_{c(1)}^\bot$.
\end{cor}

Theorem \ref{thm:A} follows directly from Corollary \ref{claim:2}: since $d\phi_c$ is an isometry, Lemma \ref{claim:2} implies that $d\phi_c^{-1}(\lie a_{\phi_c(p)})=\lie a_p$. Applying Theorem \ref{thm:AS}, one directly gets the equality $TL^\#_p\cap \cal V_p=\lie a_p$. \halmos

Once established Theorem \ref{thm:A}, one can observe that $\lie a_p^\perp$ coincides with $\nu(L^\#_p)$, the space normal to $L^\#_p$. It gives the very important local version of \cite[Proposition 3.1]{lytchak2014polar} below. It guarantees the same thesis of \cite[Proposition 3.1]{lytchak2014polar} by exchanging the completeness of $M$ by the assumption of totally geodesic leaves in $\cal F$.

\begin{cor}\label{cor:flat}
	Let $\cal F$ be a totally geodesic Riemannian foliation on $M$, a non-negatively curved manifold. 
	Then the sectional curvature $sec(\xi,X)=0$, for every $\xi\in \nu(L^\#)$ and $X\in \cal H$ and  $\nu(L^\#)$ is parallel translated along $\exp(tX)$.
\end{cor}
\begin{proof}
	Along the proof of Theorem \ref{thm:A}, we have shown that the distribution $p\mapsto \lie a^\perp_p=\nu(L^\#_p)$ is invariant along holonomy transformation defined by horizontal geodesics. 
	Moreover, if  $\xi(t)$ is the holonomy field defined by $\xi\in\lie a^\perp _p$ along $\gamma(t)=\exp(tX)$, $X\in \cal H_p$, then \[sec(\xi(t),\dot \gamma(t))=\frac{\|A^{\xi(t)} \dot \gamma(t)\|^2}{\|\dot \gamma(t)\|^2\|\xi(t)\|^2}\equiv0.\]
	On the one hand,  $\nabla_{\dot \gamma(t)}\xi(t)=0$, thus $\xi(t)$ is parallel. On the other hand, $\xi(t)\in\lie a^\perp_{\gamma(t)}$ for every $t$, therefore $\lie a^\perp$ is parallel along $\gamma$.	
\end{proof}

\subsection{Reduction to  the single-dual-leaf case}\label{sec:pi1}

Here we reduce the proof of Theorem \ref{thm:groveF} to the case of an irreducible $\cal F$ (i.e., with only one dual leaf). 
First we observe that it is sufficient to prove Theorem \ref{thm:groveF} locally: if $\{U_i\}$ is an open cover of $U$ such that $\cal F|_{U_i}$ is the restriction of a coset foliation defined by a connected subgroup, then $\cal F|_{U_i}$ and $\cal F|_{U_j}$ must be the restriction of the same coset foliation whenever $U_i\cap U_j\neq \emptyset$. 
To reduce to the case of only one dual leaf, we give a local version of the following result due to Lytchak (see also \cite{silva2020completeness}):

\begin{theorem}[Corollary 3.3 \cite{lytchak2014polar}]\label{thm:lytchak}
	Let $\cal F$ be a regular Riemannian foliation on a simply connected compact symmetric space $M$. Then there is a metric decomposition  $M=M_1\times M_2$ and a foliation $\cal F_1$ on $M_1$ such that each slice $M_1\times\{x_2\}$ is a dual leaf for $\cal F$ and $\cal F$ satisfies
	\[\cal F=\{L\times M_2~|~L\in \cal F_1\}. \]
\end{theorem}

Again, let $\cal F$ be a Riemannian submersion and consider the decomposition $G=G_1\times G_2$ given by Theorem \ref{thm:lytchak}. If  $\cal F_1$ is given by left $H$-cosets, $H<G_1$, then $\cal F$ is given by the left $H\times G_2$-cosets. 

We now establish our local version of Theorem \ref{thm:lytchak}:

\begin{prop}\label{prop:Lytchak}
	Let $\cal F$ be a totally geodesic  Riemannian foliation on a simply connected symmetric space $M$ with non-negative sectional curvature. Then there is a metric decomposition  $M=M_1\times M_2$ and a foliation $\cal F_1$ on $M_1$ such that each slice $M_1\times\{x_2\}$ is a dual leaf for $\cal F$ and $\cal F$ satisfies
\[\cal F=\{L\times M_2~|~L\in \cal F_1\}. \]	
\end{prop}
\begin{proof}
Theorem \ref{thm:lytchak} is based on \cite[Proposition 3.1]{lytchak2014polar} and the completeness of dual leaves of regular Riemannian foliations on compact non-negatively curved manifolds (\cite[Theorem 3, item (b)]{wilkilng-dual}). Here we argue on how to trade completeness by the assumption of totally geodesic leaves on both points.

We first observe that dual leaves of (non-singular) totally geodesic foliations must be complete. In such case, holonomy transformations are local isometries of leaves, and can be lifted as full isometries defined on the whole universal cover o each leaf. Therefore the intersection of the dual leaf to a leaf $L$, $L^\#\cap L$ is (finitely covered by) the orbit of a proper group action on $\tilde L$. It follows that $L^\#$ is complete since $L^\#$ is invariant under holonomy transformations.

On the other hand, the proof of Proposition 3.1 in \cite{lytchak2014polar} is based on simple Lie algebraic computations (by identifying $M=G/K$ and $\lie g=TM\oplus \lie k$) and the following two properties: 
$sec(\nu(L^\#),\cal H)=0$ and that $\nu(L^\#)$ is invariant with respect to parallel translations along horizontal geodesics. These two facts are guaranteed by the arguments in \cite{wilkilng-dual} for singular Riemannian foliation on a complete ambient space  with non-negative sectional curvature. In our case, these two facts were recovered by Corollary \ref{cor:flat}.
\end{proof}

Considering the arguments above, it is sufficient to prove Theorems \ref{thm:grove}, \ref{thm:groveF} assuming that $\cal F$ has only one dual leaf.
This assumption is used in section \ref{sec:proof} in order to apply Theorem \ref{thm:A}.

\section{The $A$-root system}\label{sec:thmB}

Both in here and in section \ref{sec:good}  we work with the complexification of some related spaces, specially $\lie g$ and $\cal H_\I$. Given a vector space $V$, the complexification of $V$ will be denoted by $V^{\bb C}$. Given an operator $A\co V\to V$, its natural complexification is denoted by the same letter $A\co V^\bb C\to V^\bb C$ and is defined by $A(x+iy)=A(x)+iA(y)$. We follow Knapp \cite{knapp2013lie} and extend inner products on $V$ to $\bb C$-bilinear symmetric products on $V^\bb C$ (not to Hermitian, positive definite ones).

The usual setting for a root system  consists of an abelian real Lie algebra $\lie t$ acting on a linear space $V$ through a Lie algebra morphism $\rho\co\lie t\to \End(V)$. For instance, one may endow $V$ with an inner product and suppose that $\rho(\lie t)\subseteq \End(V)$ is a subspace of commuting skew-adjoint linear endomorphisms of $V$. In this case, $\rho$ naturally defines an action on $V^{\bb C}$ and the  subset $\rho(\lie t)\subseteq \End(V^\bb C) $ consists of endomorphisms with pure imaginary eigenvalues that can be diagonalized in a single bases. The \textit{root decomposition induced by $\rho(\lie t)$} is defined by 
\begin{equation*}
V^{\bb C}=\sum_{\alpha\in\Pi} V_\alpha.
\end{equation*}
where $V_\alpha$ is the \textit{weight space} of the linear function $\alpha:\lie t\to i\bb R$: 
\[V_\alpha=\{X\in V^{\bb C}~|~\rho(A)X=\alpha(A)X,~\forall A\in\lie t\}.\]
Whenever $V_\alpha\neq \{0\}$, for $\alpha\neq 0$, we call $\alpha$ a \textit{root}. We denote the set of roots by $\Pi(\lie t)$.

Let $\cal F$ be a Riemannian foliation. Let $\imath \co \lie t^v \looparrowright L_p$ be an immersed  totally geodesic flat with $\imath(0)=p$. Here we prove  that, under certain conditions, $\rho_A(\xi)=A^\xi$ defines a representation of $\lie t^v$ on $\cal H_p$.
We use the following convention for the Riemannian curvature:
\[R(X,Y)Z=\nabla_X\nabla_YZ-\nabla_Y\nabla_XZ-\nabla_{[X,Y]}Z. \]

\begingroup
\def\thetheorem{\ref{thm:tori}}
\begin{theorem}
	Let $\cal F$ be a Riemannian foliation with totally geodesic leaves on a manifold $M$. Let $L_p$ be the leaf through $p\in M$ and that a neighborhood of 0 in a subspace $\lie t^v\subseteq \cal V_p$ exponentiates to a totally geodesic flat. Suppose  that one of the hypothesis hold:
	\begin{enumerate}[$(a)$]
		\item $\exp(\lie t^v)$ is complete and $A$ is bounded;
		\item $L_\I$ is the open neighborhood of a compact symmetric space.
	\end{enumerate}
	Then, $R(\eta,\xi)^h=[A^\eta,A^\xi]$ for all $\xi,\eta\in \lie t^v$. 
\end{theorem}
\addtocounter{theorem}{-1}
\endgroup

We begin by proving item $(a)$.
\begin{proof}[Proof of item $(a)$]
Consider   basic horizontal fields $X,Y$  and vertical fields $\xi,\eta$ such that $\nabla_\xi\eta=\nabla_\eta\xi=0$ along $\exp(\lie t^v)$.  
We have,
\begin{align*}
\lr{R(\eta,\xi)X,Y}=&\lr{\nabla_\eta\nabla_\xi X-\nabla_\xi\nabla_\eta X,Y} = \lr{\nabla_\eta (A^\xi X)- \nabla_\xi(A^\eta X),Y}\\  
 =& -\eta\lr{A_XY,\xi}+\xi\lr{A_XY,\eta}-\lr{ A^\xi X,A^\eta Y}+\lr{A^\eta X,A^\xi Y}\\
 =& -\lr{\nabla_\eta(A_XY),\xi}+\lr{\nabla_\xi(A_XY),\eta}+\lr{(A^\eta A^\xi-A^\xi A^\eta)X,Y}.
\end{align*}
The proof is concluded by observing that $R^h(\eta,\xi)\cal V_p=0$, since fibers are totally geodesic, and $\lr{\nabla_\xi(A_XY),\eta}=-\lr{\nabla_\eta(A_XY),\xi}=0$.
For the last, consider the geodesic $\gamma(s)=\exp(t\xi)$ and recall that $A_XY$ is a Jacobi field along $\gamma$. We have,
\begin{equation*}\label{proof:tori}
	\xi\xi\lr{A_XY,\eta}=\lr{\nabla_\xi\nabla_\xi(A_XY),\eta}=\lr{R(\xi,\eta)\xi,A_XY}=0.
	\end{equation*}
	Therefore,  $\varphi(t)=\lr{A_{X}Y,\eta}(\gamma(t))$  is a bounded affine function in the real line.  In particular, $\varphi(t)$ must be constant and $\lr{\nabla_\xi(A_XY),\eta}=\xi\lr{A_XY,\eta}$ vanishes.
\end{proof}

In the proof above, the boundedness of $A$ is used to show that $A_XY|_{\exp(t\xi)}$ is bounded. Fortunately, there is a natural way to ensure such a bound using only local information. 

Let $\cal F$ be as in Theorem \ref{thm:groveF} i.e., $\cal F$ is a totally geodesic Riemannian foliation on an open subset  $U\subseteq G$ of a Lie group $G$ with bi-invariant metric. We assume that $\I\in U$ without lost of generality. Recall that, although $L_\I$ is defined only on $U$, $L_\I$ can be isometrically identified with an open neighborhood of  a complete symmetric space $\tilde L_\I$ (\cite[Theorem 5.1]{helgasondifferential}).  Since $\tilde L_\I$ must have non-negative sectional curvature, it is locally isometric to a product $L_0\times L_1$, where $L_0$ is an Euclidean space and $L_1$ is a compact symmetric space. It is easy to conclude that $A_XY$ can only have unbounded components on $L_0$. With this motivation in mind, we prove item $(b)$

\begin{proof}[Proof of  item $(b)$]
Observe that every isometry $\varphi:L_\I\to L_\I$ can be extended to an isometry of $\tilde L_\I$ (for instance, if $\phi\co L_\I\to L_\I$ is an isometry, then its graph  is a closed totally geodesic submanifold $\Gamma\subseteq L_\I\times L_\I$, thus a symmetric space by itself. Therefore, there is a unique symmetric space $\tilde \Gamma$ containing $\Gamma$ as an open subset. One clearly sees that $\tilde \Gamma$ can be naturally identified as a submanifold of $\tilde L_\I\times \tilde L_\I$ which is the graph of an isometry $\tilde\varphi$). In particular, Killing fields on $L_\I$ are the restriction of Killing fields in $\tilde L_\I$.

Since $\tilde L_\I$ is locally the product $L_0\times L_1$,  a Killing field $\zeta$ in $L_\I$ decomposes as the sum of a component $\zeta_0$ in $L_0$ and $\zeta_1$ in $L_1$. Since $L_1$ is compact, $\zeta$ is unbounded only if $\zeta_0$ is unbounded. The result now follows since $A_XY|_{L_\I}$ is a Killing field, whenever $X,Y$ are basic horizontal.
\end{proof}

\subsection{Splitting of totally geodesic foliations}\label{sec:split}

At last, we observe that, even if $L_\I$ has an Euclidean factor, we can still split the foliation and use Theorem \ref{thm:tori} on the compact factor. The splitting we mean is stated in the next result, which should be either known or expected to hold among specialists.

\begin{theorem}\label{thm:split}
	Let $\cal F$ be a totally geodesic Riemannian foliation  with simply connected dual leaves. For a fixed $L\in \cal F$, let $TL=\bigoplus_{i=0}^s\tilde\Delta_i$ be the de Rham decomposition of $TL$. Then, there are smooth integrable distributions $\Delta_0,...,\Delta_s$ on $M$ such that, for every $i$:
	\begin{enumerate}[$(1)$]
		\item $\Delta_i$ is vertical and $\cal V=\bigoplus_{i=0}^s\Delta_i$;
		\item for every leaf $L'\in \cal F$, $TL'=\bigoplus_{i=0}^s\Delta_i|_{L'}$ is the de Rham decomposition of $TL'$;
		\item $\cal F_i$, the foliation defined by $\Delta_i$, is Riemannian and has totally geodesic leaves.
	\end{enumerate}
\end{theorem}

Let $\cal F$ be an irreducible Riemannian foliation with totally geodesic leaves. Let $L_p$ be the leaf through $p\in M$ and denote $TL_p=\bigoplus_i\tilde\Delta_i$ as the de Rham decomposition of $TL_p$. 

Let $c\co[0,1]\to M$ be a horizontal curve. Recall that holonomy fields define a linear isometry $d\phi_c\co \cal V_{c(0)}\to \cal V_{c(1)}$, $d\phi_c(\xi)(\xi(1))$, where $\xi(t)$ is the holonomy field defined by $\xi$ along $c$. Moreover,  by recalling that Riemannian foliations are locally given by Riemannian submersions,  one concludes that $c$ defines an isometry between universal covers $\phi_c\co \tilde L_{c(0)}\to \tilde L_{c(1)}$.

Fix a leaf $L\in \cal F$ and $TL=\bigoplus \tilde \Delta_i$, its de Rham decomposition. Define $\Delta_i(q)=d\phi_c(\tilde{\Delta}_i)$, where $c$ is a horizontal curve joining $p$ to $q$. We claim that $\Delta_i$ is well defined if dual leaves are simply connected.

Let $c_1,c_2:[0,1]\to M$ be horizontal curves joining $L$ to $q$. Then $(d\phi_{c_2})^{-1}d\phi_{c_1}=d\phi_{c}$, where $c$ is the concatenation of $c_1$ with the reverse of $c_2$. In particular, $d\phi_{c_1}(\tilde\Delta_i)=d\phi_{c_2}(\tilde \Delta_i)$ for every pair of horizontal curves $c_1,c_2$, such that $c_1(0),c_2(0)\in L$ and $c_1(1)=c_2(1)$, if and only if  $d\phi_{c}(\tilde \Delta_i)=\tilde \Delta_i$ for every horizontal curve $c$, such that $c(0),c(1)\in L$.
Denote
\[\Hol_L=\{\phi_c\co\tilde L_p\to \tilde L_p~|~c(0),c(1)\in L,~c~\text{horiontal}\}.\]
Observe that $\Hol_L$ is a subgroup of isometries of $\tilde L_p$. Moreover, according to Eschenburg--Heintze \cite{Eschenburg-Heintze}, it is sufficient to show that $\Hol_L$ does not exchange factors of the de Rham decomposition of $\tilde L$.
	
On the one hand, if $\Hol_L$ exchange factors, the action of $\Hol_L$ on  $\tilde L$ has non-connected isotropy group at some point (if $\tilde L$ has two isometric factors $M_1\times M_1$, then the points in the diagonal have non-connected isotropy). On the other hand, if $\phi_c(p)=p$, then $c(0)=c(1)=p$ and $d\phi_c$ is naturally identified with the isotropy  representation of $\phi_c$ at $p$. Therefore, it is sufficient to prove that the group:
\[ H_p=\{d\phi_c\co \cal V_p\to \cal V_p~|~c(0)=c(1)=p,~c~\text{horiontal}\} \]
is connected. 

\begin{claim}
	The distribution $\tilde\Delta_i$ is $\cal H_p$-invariant.
\end{claim}
\begin{proof}
Let $\bar{\pi}\co O(\cal V)\to M$ be the bundle of orthonormal frames of $\cal V$, i.e., 
\[O(\cal V)=\{b\co \bb R^k\to \cal V_p~|~b~\text{linear isometry}\}.\] 
$O(k)$ acts on $O(\cal V)$ by right composition. Observe that $\tilde{\cal F}=\{\bar{\pi}^{-1}(L)~|~ L\in \cal F \}$
defines a foliation on $O(\cal V)$. One can make $\tilde{\cal F}$ Riemannian by observing that $\nabla^\cal V_X\xi=(\nabla_X\xi)^v$ defines a $O(k)$-invariant $\tilde{\cal F}$-horizontal distribution $\tilde{\cal H}$ and equipping each fiber with a bi-invariant metric. 

Let $b\in O(\cal V)$ be such that $\bar \pi(b)=p$. Denote the $\tilde{\cal F}$-dual leaf at $b$ by $\cal E_b$. One can observe that $\bar \pi|_{\cal E_b}\co \cal E_b\to L^\#_p$ is a principal bundle with principal group $b^{-1}(\cal E_b\cap \bar{\pi}^{-1}(p))=b^{-1}\cal H_pb$. By definition, every point in $\cal E_b$ is connected to $b$ by a $\tilde{\cal F}$-horizontal curve, in particular, by a $\bar \pi$-horizontal curve (i.e., orthogonal to the $\bar\pi$-fibers). Therefore,  $\bar \pi|_{\cal E_b}$ is irreducible as  a principal bundle, concluding that $H$ is connected, since $L^\#_p$ is simply connected (see \cite{knI} for details).
\end{proof}

Observe that $\Delta_i\subseteq \cal V$ and $\bigoplus\Delta_i|_{L_q}$ is a de Rham decomposition in each leaf $L_q$. Therefore $\Delta_i|_{L_q}$ integrates a Riemannian foliation with totally geodesic leaves on each $L_q$. Since $L_q$ is totally geodesic on $M$, the integral submanifolds of $\Delta_i$ are totally geodesic on $M$. It is left to prove that the foliation defined by $\Delta_i$ on $M$ is Riemannian. 

\begin{claim}
	Let $\cal F_i$ be the foliation defined by $\Delta_i$. Then $\cal F_i$ is Riemannian.
\end{claim}
\begin{proof}
	We follow Gromoll--Walschap  \cite[Theorem 1.2.1]{gw} and show that the Lie derivative $\cal L_{U}g^{\Delta_i^\perp}=0$ for every $U\in\Delta_i$. Observe that $\Delta_i^\bot=\cal H\oplus (\oplus_{i\neq j}\Delta_j)$ and: $\cal L_Ug(\cal H, \cal V)=0$ and $\cal L_{U}g(\cal H, \cal H)=0$ since $\cal F$ is Riemannian and $U$ vertical; $\cal L_{U}g({\Delta_j},\Delta_k)=0$, $j,k\neq i$, since the leaves of $\cal F$ are totally geodesic and the restriction of $\Delta_i$ to each leaf is Riemannian. 
\end{proof}

\section{Good triples and the $\cal H_\pm$-decomposition}\label{sec:good}

From now on, we specialize to the case of a totally geodesic Riemannian foliation $\cal F$ on $U\subseteq G$,   a compact Lie group with bi-invariant metric. Furthermore, we assume that $\cal F$ satisfy the thesis hypothesis in Theorem \ref{thm:tori} and use it throughout. We call such a foliation as a \textit{Ranjan foliation}. 

This section is the technical bulk of the paper and concludes the proof of Theorem \ref{thm:grove}. Here we decompose $\lie g$ in commuting ideals $\lie g_+,\lie g_-$ satisfying \eqref{eq:Aintroduction} (we actually consider a third ideal $\lie g_0$ for technical reasons, but it can be incorporated in either $\lie g_+$ or $\lie g_-$.) The decomposition $\lie g_+,\lie g_-$ will be achieved step by step: in section \ref{sec:brac1} we fix a maximal abelian subalgebra inside $\cal V_\I$ and decompose the elements of $\cal H_{\I}$ according to the relation between the root system of $\lie g$ and of Theorem \ref{thm:tori}. The process produces subspaces  $\cal H_+(\lie t^v),\cal H_-(\lie t^v),\cal H_0(\lie t^v)\subseteq \lie g$; section \eqref{sec:brac2} proves a strong commuting identity for $\cal H_+(\lie t^v),\cal H_-(\lie t^v)$ which is used throughout; in section \ref{sec:brac3} we expand $\cal H_\pm(\lie t^v)$ to subspaces $\lie H_\pm(\cal F)$ which are independent of the choice of $\lie t^v$; in \ref{sec:proofHpm} we prove that $\cal H_\pm(\cal F)$ commute, providing the very important Lemma \ref{cor:A}. Using Lemma \ref{cor:A} and the irreducibility hypothesis, we put Theorem \ref{thm:A} into play in order to prove that $\ad_{\cal V_\I}$ preserves the subalgebras generated by $\mathcal H_\pm(\cal F)$; finally, in section \ref{sec:proof} we prove that the algebras generated by $\cal H_\pm(\cal F)$ are ideals and that they commute. Together with Lemma \ref{cor:A} and Proposition \ref{prop:MunteanuTapp}, it concludes the proof of Theorem \ref{thm:grove}.

The foliation defined by \eqref{eq:example_product} gives a picture of $\cal H_\pm(\cal F)$: let $H<G=G_1\times G_2$ and consider $\cal F_H$ as the foliation defined by the orbits of $(h_1,h_2)\cdot(g_1,g_2)=
(h_1g_1,g_2h_2^{-1})$.  Each  vector in $\cal H_{(g_1,g_2)}$ has a component tangent to $G_1\times\{g_2\}$ and other tangent to $\{g_1\}\times G_1$. The  subsets  spanned by such components are the desired subspaces $\cal H_+(\cal F)$ and $\cal H_-(\cal F)$, respectively. In this case the ideals generated by $\cal H_+(\cal F),\cal H_-(\cal F)$ clearly commute. The whole paper is dedicated to show that this is the general situation.

All arguments in section \ref{sec:brac1}-\ref{sec:proofHpm} follows from Theorem \ref{thm:tori} and Munteanu--Tapp's Theorem \ref{thm:tapp}. That is, assuming that both theorems hold at $\I$, the results in section \ref{sec:brac1}-\ref{sec:proofHpm} holds.  Section \ref{sec:proof} further requires Lytchak's decomposition Theorem \ref{thm:lytchak} (or its local version, Proposition \ref{prop:Lytchak}) to reduce the general case to the case of  a single dual leaf, which is required to apply Theorem \ref{thm:A}.

Given a Ranjan foliation, the leaf through $\I$, $L_\I$, is a totally geodesic submanifold of a symmetric space, thus a symmetric space itself. In particular, $\lie t^v=\lie t\cap \cal V_\I$ exponentiates to a maximal totally geodesic flat in  $L_\I$, as long as $\lie t$ is a totally geodesic abelian subalgebra in $\lie g$.

At last, we recall that the $(4,0)$ Riemannian curvature tensor of a bi-invariant metric satisfies:
\begin{equation}\label{eq:R}
R(X,Y,Z,W)=-\frac{1}{4}\lr{[X,Y],[Z,W]}.
\end{equation}

\subsection{The horizontal decomposition I}\label{sec:brac1}
Consider a maximal vertical abelian  subalgebra $\lie t^v\subseteq \cal V_{\I}$ completed to a maximal abelian subalgebra $\lie t=\lie t^v\oplus \lie t'\subseteq \lie g=T_\I G$. $\lie t$ and $\lie t^v$  act  on $\lie g$ through the representations  $\rho_{\ad}(\xi)=\frac{1}{2}\ad_\xi$ and  $\rho_A(\xi)=A^\xi$, respectively.

Given  linear maps $\alpha\co \lie t^v\to i\bb R$, $\alpha'\co\lie t'\to i\bb R$, we consider the spaces:
\begin{align*}
\lie g_{\alpha,\alpha'}(\lie t)&=\{X\in\lie g^{\bb C}~|~\textstyle{\h}\ad_{\xi+\xi'}X=(\alpha(\xi)+\alpha'(\xi'))X,~ \text{for all }\xi\in\lie t^v,~ \xi'\in\lie t' \},\\
\lie g_\alpha(\lie t^v) &=\{X\in\lie g^{\bb C} ~|~\textstyle{\h}\ad_{\xi}X=\alpha(\xi)X,~ \text{for all }\xi\in\lie t^v\},\\
\cal H_\alpha(\lie t^v) &=\{X\in\HH ~|~A^{\xi}X=\alpha(\xi)X,~ \text{for all }\xi\in\lie t^v \}.
\end{align*}
We call their elements as $(\alpha,\alpha')$-weights, vertical $\alpha$-weights and $\alpha$-$A$-weight, respectively. Whenever one of such spaces is non-trivial, the corresponding linear map is called a root, vertical root or $A$-root, respectively. We denote the corresponding set of roots as $\Pi(\lie t)$, $\Pi^v(\lie t^v)$ and $\Pi^A(\lie t^v)$.


Observe that a vertical root $\alpha$ can always be completed to a root $(\alpha,\alpha')\co\lie t\to i\bb R$ of $\lie g$ by  a linear function $\alpha'\co\lie t'\to i\bb R$:  since $\lie t'$ commutes with $\lie t^v$, $\lie g_\alpha(\lie t^v)$ can be further decomposed as
\[\lie g_\alpha(\lie t^v)=\sum_{\alpha'}\lie g_{\alpha,\alpha'}(\lie t). \]
Conversely, the identification $(\lie t\oplus\lie t')^*=\lie t^*\oplus (\lie t')^*$ is given by the restriction $\tilde \alpha\mapsto (\tilde\alpha|_{\lie t^v},\tilde \alpha|_{\lie t'})$. In particular, if $(\alpha,\alpha')$ is a root, $\alpha$ is a vertical root. 
We take advantage of this two-level decomposition:
\begin{equation*}
\lie g^{\bb C}=(\lie t^v)^{\bb C}+\sum_{\alpha\in\Pi^v(\lie t^v)}\lie g_\alpha(\lie t^v)=\lie t^\bb C+\sum_{(\alpha,\alpha')\in \Pi(\lie t)}\lie g_{\alpha,\alpha'}(\lie t),
\end{equation*}
and the decomposition based on Theorem \ref{thm:tori}:
\[\HH=\cal H_0(\lie t^v)+\sum_{\alpha\in\Pi^A(\lie t^v)}\cal H_\alpha(\lie t^v), \]
where $\cal H_0(\lie t^v)=\cap_{\xi \in {\lie t^v}}\ker A^\xi$. The $\lie g_{\alpha,\beta}(\lie t)$-, $\lie g_\alpha(\lie t^v)$-, $\cal H_\alpha(\lie t^v)$-components of $X$ will be denoted by $X_{\alpha,\beta}$, $X_\alpha$, $X^\alpha$, respectively.

Our first step in this algebraic part is to relate $A$-weights to vertical weights.

\begin{lem}\label{lem:root1}
	Let $\lie t^v$ be a maximal vertical abelian subalgebra. Then, 
\begin{equation*}\label{eq:PiA}
	\Pi^{A}(\lie t^v)=\{\alpha\in \Pi^{v}(\lie t^v)~|~\cal H_e^\bb C\cap (\lie g_\alpha(\lie t^v)+\lie g_{-\alpha}(\lie t^v))\neq \emptyset\}\subseteq \Pi^v(\lie t^v).
\end{equation*}
	Moreover, if $X\in\HH$ is an $\alpha$-$A$-weight, then 
	\begin{equation*}\label{eq:Xalpha}
	X=X_{\alpha}+X_{-\alpha}\in \lie g_\alpha(\lie t^v)+\lie g_{-\alpha}(\lie t^v).
	\end{equation*}
\end{lem}
\begin{proof}
	Since we are dealing with Ranjan foliations, Grey--O'Neill's equations gives  (compare Ranjan \cite{ranjan}, equation (1.3)):
	\begin{equation*}\label{eq:Atoad}
	-A^\xi A^\xi X= R(X,\xi)\xi=-\frac{1}{4}\ad_\xi^2 X
	\end{equation*}
	for every $\xi\in\cal V_{\I}$. Therefore,   if $X$  is either   a $\alpha$-$A$-weight or $(\h\ad_\xi)^2X=\alpha(\xi)^2X$, we get 
	\begin{equation}\label{eq:A=vweight}
	(A^\xi)^2X=\alpha(\xi)^2X={\frac{1}{4}}\ad_\xi^2X.
	\end{equation}
	Recall that two roots $\alpha,\beta\in\Pi^v(\lie t^v)$ satisfying  $\alpha(\xi)^2=\beta(\xi)^2$ for all $\xi$ must satisfy $\alpha=\pm\beta$. In particular, 
	\begin{align*}
	\lie g_\alpha(\lie t^v)+\lie g_{-\alpha}(\lie t^v)&=\bigcap_{\xi\in\lie t^v}\ker\Big((\tfrac{1}{2}\ad_\xi)^2-\alpha(\xi)^2\id\Big),\\
	\cal H_\alpha(\lie t^v)+\cal H_{-\alpha}(\lie t^v)&= \bigcap_{\xi\in\lie t^v}\ker \Big((A^\xi)^2-\alpha(\xi)^2\id\Big).
	\end{align*}
	Thus,  $\cal H_\alpha(\lie t^v)+\cal H_{-\alpha}(\lie t^v)\subseteq \lie g_\alpha(\lie t^v)+\lie g_{-\alpha}(\lie t^v)$. Since $\cal H_\alpha(\lie t^v)+\cal H_{-\alpha}(\lie t^v)\subseteq \HH$, we conclude:
	\begin{gather*}
	\cal H_\alpha(\lie t^v)+\cal H_{-\alpha}(\lie t^v)=\HH\cap  (\lie g_\alpha(\lie t^v)+\lie g_{-\alpha}(\lie t^v)).\qedhere
	\end{gather*}
\end{proof}

In particular, all vertical roots appearing on horizontal vectors are the $A$-roots, i.e., 
\[X=X_0+\sum_{\alpha\in\Pi^A(\lie t^v)}X^\alpha=X_0+\sum_{\alpha\in\Pi^A(\lie t^v)}X_ \alpha, \]
where 
\[X_0\in \cal H_0(\lie t^v)=\cap_{\xi\in\lie t^v}\ker A^\xi=\HH\cap (\cap_{\xi\in\lie t^v}\ker\ad_\xi).\] 

Following Lemma \ref{lem:root1}, we define the projections $\pi_{\epsilon}(\lie t^v)\co \HH\to \lie g^\bb C$, $\epsilon=0,+,-$ by
\[\textstyle \pi_0(\lie t^v)(X_0+\sum X^\alpha)=X_0, \qquad   \pi_\pm(\lie t^v)(X^\alpha)=(X^\alpha)_{\pm\alpha}\in \lie g_{\pm\alpha}(\lie t^v). \]

So, $X=\pi_0(\lie t^v)(X)+\pi_+(\lie t^v)(X)+\pi_-(\lie t^v)(X)$ for every $X\in\HH$. 
Since $\lie g^\bb C$ is the complexification of $\lie g$, $\lie g^\bb C$ inherits two natural objects: a complex conjugation and the extension of the bi-invariant inner product $\lr{,}$ to a symmetric $\bb C$-bilinear form, also denoted by $\lr{,}$. We emphasize that we choose the $\bb C$-extension of $\lr{,}$ so both $A^\xi$ and $\ad_\xi$ are skew-symmetric. In particular:
\begin{gather*}\label{eq:product1}
\lr{X,Y}=\lr{X_0,Y_0}+\sum_{\alpha\in\Pi^A(\lie t^v)}\lr{X_\alpha,Y_{-\alpha}}=\lr{X_0,Y_0}+\sum_{\alpha\in\Pi^A(\lie t^v)}\lr{X^\alpha,Y^{-\alpha}}.  
\end{gather*}

We also observe that $\cal H_\epsilon(\lie t^v)$ are real subspaces, i.e., invariant by the complex conjugation, since they are (a sum of) the image of  real operators restricted to the kernel of real operators:
	\begin{align*}
\textstyle \cal H_\pm(\lie t^v)\cap (\lie g_\alpha(\lie t^v)\oplus \lie g_{-\alpha}(\lie t^v))=  (A^\xi\pm\h\ad_\xi)(\cal H_\alpha(\lie t^v)\oplus \cal H_{-\alpha}(\lie t^v))
\\ =\textstyle (A^\xi\pm\h\ad_\xi)\ker\Big((A^\xi)^2-\alpha(\xi)^2\id_{\HH}\Big). 
\end{align*}    
Analogously, $\cal H_0(\lie t^v)$ is the intersection of kernels of real operators. In particular,  $\cal H_0(\lie t^v)$ is orthogonal to $\cal H_+(\lie t^v)+\cal H_-(\lie t^v)$. We gather the notations/statements in the last paragraphs  as a lemma:

\begin{lem}
	Let $\lie t^v$ be a vertical abelian subagebra and $\pi_\epsilon(\lie t^v)$ be the projections defined by Lemma \ref{lem:root1}. Then:
	\begin{enumerate}[$(1)$]
		\item  $X=X_0+X_++X_-$ for every $X\in\HH$, where $X_\epsilon=\pi_\epsilon(\lie t^v)(X)$;
		\item $\cal H_\epsilon(\lie t^v)$ are real subspaces of $\lie g^\bb C$;
		\item $\cal H_0(\lie t^v)\perp  (\cal H_+(\lie t^v)+\cal H_-(\lie t^v)),$
	\end{enumerate}
\end{lem}

We state another  technical lemma to be used in the next section.

\begin{lem}\label{lem:tori0}
	Let $\lie t^v$ be a maximal vertical subalgebra and $\lie t\supseteq\lie t^v$ a maximal torus. Then, $\lie t$ decomposes orthogonally as $\lie t=\lie t^v\oplus \lie t'$  with $\lie t'\subseteq \cal H_0(\lie t^v)$.
\end{lem}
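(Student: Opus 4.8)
The plan is to locate a maximal torus inside the centralizer $Z:=\bigcap_{\xi\in\lie t^v}\ker\ad_\xi=\{Y\in\lie g:[Y,\lie t^v]=0\}$ and to pin down the position of $\cal V_{\id}$ relative to $Z$. Note first that $Z$ is a subalgebra in which $\lie t^v$ is central, that $\lie g=Z\oplus Z^\perp$ orthogonally, and that after complexifying $Z^{\bb C}=\lie g_0(\lie t^v)$ while $(Z^\perp)^{\bb C}=\bigoplus_{\alpha\neq 0}\lie g_\alpha(\lie t^v)$; these are standard facts for compact Lie algebras, using that each $\ad_\xi$ is skew-symmetric. Now if $\lie t\supseteq\lie t^v$ is any maximal torus of $\lie g$ then $\lie t$ is abelian, hence centralizes $\lie t^v$, so $\lie t\subseteq Z$, and in particular $\lie t':=\lie t\ominus\lie t^v\subseteq Z$. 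Since $\cal H_0(\lie t^v)=\cal H_{\id}\cap\bigcap_{\xi\in\lie t^v}\ker\ad_\xi=\cal H_{\id}\cap Z$ (this identity is recorded after \eqref{eq:keradxi}) and since $\lie t'\perp\lie t^v$ by construction, the lemma reduces to showing $\lie t'\perp\cal V_{\id}$, and for this --- again because $\lie t'\subseteq Z$ and $\lie t'\perp\lie t^v$ --- it suffices to establish the orthogonal splitting $\cal V_{\id}=\lie t^v\oplus(\cal V_{\id}\cap Z^\perp)$.

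To prove this splitting I would first observe that $Z\cap\cal V_{\id}=\lie t^v$: for any $\zeta\in Z\cap\cal V_{\id}$ the subspace $\lie t^v+\bb R\zeta$ is an abelian subalgebra of $\lie g$ contained in $\cal V_{\id}$, so maximality of $\lie t^v$ forces $\zeta\in\lie t^v$. Next, by O'Neill's identity \eqref{eq:Atoad} one has $\tfrac14\ad_\xi^2 X=A^\xi A^\xi X\in\cal H_{\id}$ for every horizontal $X$ and every $\xi\in\cal V_{\id}$; thus $\ad_\xi^2$ maps $\cal H_{\id}$ into itself, and since $\ad_\xi$ is skew-symmetric the operator $\ad_\xi^2$ is symmetric, hence it also preserves $\cal V_{\id}=\cal H_{\id}^\perp$. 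Restricting to $\xi\in\lie t^v$, the family $\{\ad_\xi^2:\xi\in\lie t^v\}$ consists of commuting symmetric operators whose joint eigenspaces are exactly $Z^{\bb C}$ together with the planes $\lie g_\alpha(\lie t^v)\oplus\lie g_{-\alpha}(\lie t^v)$ for $\alpha\neq 0$ --- two roots have equal squares if and only if they differ by a sign, as in the proof of Lemma \ref{lem:root1}. Hence $\cal V_{\id}^{\bb C}$ decomposes orthogonally along these eigenspaces; its component in $Z^{\bb C}$ is $\cal V_{\id}^{\bb C}\cap Z^{\bb C}=(\lie t^v)^{\bb C}$ by the previous sentence, and every other component lies in $(Z^\perp)^{\bb C}$. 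Taking real parts gives $\cal V_{\id}=\lie t^v\oplus(\cal V_{\id}\cap Z^\perp)$, as required.

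Combining the pieces: $\lie t'\subseteq Z$, while $\cal V_{\id}\ominus\lie t^v=\cal V_{\id}\cap Z^\perp\subseteq Z^\perp$, so $\lie t'\perp(\cal V_{\id}\ominus\lie t^v)$; together with $\lie t'\perp\lie t^v$ this yields $\lie t'\perp\cal V_{\id}$, i.e. $\lie t'\subseteq\cal H_{\id}$, and hence $\lie t'\subseteq\cal H_{\id}\cap Z=\cal H_0(\lie t^v)$. The only substantive step is the orthogonal splitting $\cal V_{\id}=\lie t^v\oplus(\cal V_{\id}\cap Z^\perp)$ --- equivalently, that the orthogonal projection of $\cal V_{\id}$ onto $Z$ equals $\lie t^v$ --- everything else being elementary weight-space bookkeeping; the crux there is the $\ad_\xi^2$-invariance of $\cal V_{\id}$, which is exactly where the totally geodesic hypothesis (through O'Neill's equation) enters.
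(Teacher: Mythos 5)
Your proof is correct, and it gets to the lemma by a somewhat different mechanism than the paper, although the raw ingredients coincide. The paper's argument is a three-line curvature computation: for $t\in\lie t$ and $l\in\lie t^v$ one has $R(t,l)=0$ because $\lie t$ is abelian, while the totally geodesic hypothesis makes the mixed curvature $R(\cal H_{\id},l,\xi,\eta)$ vanish for all vertical $\xi,\eta$; subtracting, $R(t^v,l,\xi,\eta)=0$, and the specialization $\xi=l$, $\eta=t^v$ gives $\tfrac14\|[t^v,l]\|^2=0$, after which maximality of $\lie t^v$ forces $t^v\in\lie t^v$ and hence $t^h\in\cal H_0(\lie t^v)$. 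You instead prove the stronger structural statement that the orthogonal projection of $\cal V_{\id}$ onto the centralizer $Z=\bigcap_{\xi\in\lie t^v}\ker\ad_\xi$ is exactly $\lie t^v$, i.e.\ $\cal V_{\id}=\lie t^v\oplus(\cal V_{\id}\cap Z^\perp)$, by showing that each $\ad_\xi^2$, $\xi\in\lie t^v$, preserves $\cal H_{\id}$ (via \eqref{eq:Atoad}) and hence $\cal V_{\id}$, and then decomposing $\cal V_{\id}$ along the joint eigenspaces $Z^{\bb C}$ and $\lie g_\alpha(\lie t^v)\oplus\lie g_{-\alpha}(\lie t^v)$ of this commuting symmetric family; the lemma then drops out of $\lie t\subseteq Z$ and $Z\cap\cal V_{\id}=\lie t^v$. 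Note that your key invariance fact, $\ad_l^2(\cal H_{\id})\subseteq\cal H_{\id}$, is precisely the case $\xi=l$ of the mixed-curvature vanishing the paper uses, and your appeal to maximality ($Z\cap\cal V_{\id}=\lie t^v$) is the same maximality step as in the paper; so the two proofs rest on identical O'Neill input, but yours packages it spectrally and yields, as a by-product, the compatibility of $\cal V_{\id}$ with the $\lie t^v$-weight decomposition (the $Z$-component of any vertical vector lies in $\lie t^v$), which is more than the lemma asks, at the cost of a longer argument than the paper's direct polarization.
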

\begin{proof}
	Let $t\in\lie t$, $l\in\lie t^v$ and decompose $t$ in its vertical and horizontal components, $t=t^v+t^h$. On the one hand, $R(t,l)=\frac{1}{2}\ad_{[t,l]}=0$ . On the other hand, since fibers are totally geodesic,  $R(\cal H,l,\xi,\eta)=0$ for all $\xi,\eta\in\cal V_{\I}$. Thus
	\begin{equation*}
	0=R(t,l,\xi,\eta)=R(t^h,l,\xi,\eta)+R(t^v,l,\xi,\eta)=R(t^v,l,\xi,\eta). 
	\end{equation*}
	In particular, $R(t^v,l,l,t^v)=\frac{1}{4}||[t^v,l]||^2=0$. Since $l\in \lie t^v$ is arbitrary and $\lie t^v$ maximal, $ t^v\in\lie t^v$. Since $t^h=t-t^v\in \lie t$, we conclude that  $[t^h,l]=0$ for all $l\in \lie t^v$, thus  $t^h\in\cal H_0(\lie t^v)$.
\end{proof}

We claim that, if $\lie t''$ is an abelian subalgebra in $\cal H_0(\lie t^v)$, there is a maximal abelian subalgebra $\lie t'\subseteq \cal H_0(\lie t^v)$ such that $\lie t'\supseteq \lie t''$. Indeed, recall that any abelian subalgebra in a compact Lie group can always be extended to a maximal abelian subalgebra. Therefore,  $\lie t\oplus \lie t''$ can be extended to a maximal $\lie t$. Since $\lie t,\lie t^v$ are arbitrary in Lemma, we conclude that there is a $\lie t'\supseteq \lie t''$, such that $\lie t=\lie t^v\oplus \lie t'$.

\subsection{The bracket identity}\label{sec:brac2}
Fix a maximal vertical abelian subalgebra $\lie t^v$ and complete it to a maximal abelian subalgebra $\lie t=\lie t^v\oplus \lie t'$. We have:


\begin{prop}\label{prop:bracmaster}
For every $X,Y\in \HH$ and $(\alpha,\alpha'), ~(\beta,\beta')\in\Pi(\lie t)$,
\begin{equation*}\label{eq:bracmaster}
	[(X_+)_{\alpha,\alpha'},(Y_-)_{\beta,\beta'}]=0.
	\end{equation*}
\end{prop}

We brake the proof into steps, stated as the next lemmas, and write $\pi_\pm(\lie t^v)(X^\alpha)=X^\alpha_\pm$. The first Lemma is a restatement of Theorem \ref{thm:tapp} taking into account the $\pi_\pm(\lie t^v)$-decomposition.

\begin{lem}\label{lem:brac0}
	Let $\xi\in\lie t^v$, $X\in \HH$. Then, for all $n,m\geq 0$,
	\begin{equation*}
	[\ad_{X}^m\ad_\xi X_-,\ad_\xi^{n+1} X_+]=0.
	\end{equation*}
\end{lem}
\begin{proof}
	Recall from the discussion in section \ref{sec:1} that $\{X,\xi,A^\xi X\}$ is a good triple for every $X\in\cal H$ and $\xi\in \cal V$.
	We apply Theorem \ref{thm:tapp} to it. 
	Let $X^\alpha$ be a $\alpha$-$A$-weight and observe that 
	\begin{gather*}
	\h\ad_\xi X^\alpha_\pm=\pm\alpha(\xi) X^\alpha_\pm,\quad\quad A^\xi (X^\alpha_++X^\alpha_-)=\alpha(\xi)X^\alpha.
	\end{gather*}
	Thus:
\begin{align*}\textstyle
B=(\h\ad_\xi -A^\xi) X &=
\sum_{\alpha\in \Pi^{A}(\lie t^v)}\alpha(\xi)\left((X^\alpha_+-X^\alpha_-)- (X^\alpha_++X^\alpha_-)\right)\\
&=\sum_{\alpha\in \Pi^{A}(\lie t^v)}-2\alpha(\xi)X^\alpha_-=\ad_\xi X_-.
\end{align*}
Analogously, $\bar B=-\ad_\xi X_+$.
\end{proof}

Expanding the sum $X_\pm=\sum X^\alpha_\pm$, we get:

\begin{cor}\label{cor:brac0}
	Let $X \in \HH$, $\xi\in\lie t^v$. Then, for all $m\geq 0$ and $\beta\in\Pi^v(\lie t^v)$,
	\begin{equation*}
	\sum_{\alpha\in\Pi^A(\lie t^v)}\alpha(\xi)[\ad_{X}^m X_-^\alpha,X_+^\beta]=0.
	\end{equation*}
\end{cor}
\begin{proof}
	From Lemma \ref{lem:brac0}, we have
	\begin{equation*}
	0=[\ad_{X}^m\ad_\xi X_-,\ad_\xi^{n+1} X_+]=\sum_{\alpha,\beta}\alpha(\xi)\beta(\xi)^{n+1}	[\ad_{X}^m X_-^\alpha,X_+^\beta]
	\end{equation*}
	for every $n\geq 0$. Suppose $\xi$ is such that $\alpha(\xi)\neq \beta(\xi)$ for every pair of distinct $A$-roots $\alpha\neq \beta$. In this case, by taking enough values of $n$ we conclude that $\sum\alpha(\xi)[\ad_{X}^m X_-^\alpha,X_+^\beta]=0$ (recall that the determinant of the Vandermonde  matrix of a set of pairwise distinct values is non-zero). However, the set of such $\xi$'s is dense in $\lie t^v$, concluding the result for every $\xi$.
\end{proof}

\begin{lem}\label{lem:bracmaster}
	Let $X \in \HH$. Then, for all $l\geq 0$ and $\alpha,\beta\in\Pi^v(\lie t^v)$,
	\begin{equation*}
	[ X_-^\alpha,\ad_{X_0}^lX_+^\beta]=0.
	\end{equation*}
\end{lem}
\begin{proof}
	We use induction on $s$ in: for all $m\geq 0$,
	\begin{equation}\label{proof:brac0:0}
\sum_{\alpha\in\Pi^v(\lie t^v)}\alpha(\xi)[\ad_{X}^m X_-^\alpha,\ad_{X_0}^sX_+^\beta]=0.
	\end{equation}
	Observe that \eqref{proof:brac0:0} holds for $s=0$ (Corollary \ref{cor:brac0}). As the induction hypothesis, we assume that \eqref{proof:brac0:0} holds for $s\leq k$ and compute $[\ad_X^mX^\alpha_-,\ad_{X_0}^{k+1}X^\beta_+]$ backwards:
	\begin{multline*}
	[\ad_X^{m+1}X^\alpha_-,\ad_{X_0}^{k}X^\beta_+]=[[X_0,\ad_X^{m}X^\alpha_-],\ad_{X_0}^{k}X^\beta_+]+[[X_-,\ad_X^{m}X^\alpha_-],\ad_{X_0}^{k}X^\beta_+]\\+[[X_+,\ad_X^{m}X^\alpha_-],\ad_{X_0}^{k}X^\beta_+]
	=\ad_{X_0}[\ad_X^{m}X^\alpha_-,\ad_{X_0}^{k}X^\beta_+]-[\ad_X^{m}X^\alpha_-,\ad_{X_0}^{k+1}X^\beta_+]\\
	+\ad_{X_-}[\ad_X^{m}X^\alpha_-,\ad_{X_0}^{k}X^\beta_+]-[\ad_X^{m}X^\alpha_-,[X_-,\ad_{X_0}^{k}X^\beta_+]]+[[X_+,\ad_X^{m}X^\alpha_-],\ad_{X_0}^{k}X^\beta_+].
	\end{multline*}
	That is,
	\begin{multline}
	\label{proof:brac0:1}
[\ad_X^{m}X^\alpha_-,\ad_{X_0}^{k+1}X^\beta_+]=	\ad_{X_0}[\ad_X^{m}X^\alpha_-,\ad_{X_0}^{k}X^\beta_+]-[\ad_X^{m+1}X^\alpha_-,\ad_{X_0}^{k}X^\beta_+]\\
	+\ad_{X_-}[\ad_X^{m}X^\alpha_-,\ad_{X_0}^{k}X^\beta_+]-[\ad_X^{m}X^\alpha_-,[X_-,\ad_{X_0}^{k}X^\beta_+]]-[[\ad_X^{m}X^\alpha_-,X_+],\ad_{X_0}^{k}X^\beta_+].
	\end{multline}
	In order to apply the induction hypothesis, we multiply both sides  by $\alpha(\xi)$ and sum  in $\alpha$.  It follows that the first three terms on the right-hand-side vanish.
	We deal with the last term in a separate claim. 
	\begin{claim}\label{claim:bracmaster1}
		$\sum\alpha(\xi)[X_+,\ad_X^{m}X^\alpha_-]=0$.
	\end{claim}
	\begin{proof}
		It is sufficient to prove that $\sum_{\alpha}\alpha(\xi)[X_+^\beta,\ad_X^{m}X^\alpha_-]=0$ for every $\beta\in\Pi^A(\lie t^v)$. We use induction on $s$ in: for every $r\geq 0$, 
		\begin{equation}\label{proof:lem:bracmaster1}
		\sum_{\alpha\in \Pi^A(\lie t^v)}\alpha(\xi)[\ad_X^rX_+^\beta,\ad_X^{s}X^\alpha_-]=0.
		\end{equation}
		The case $s=0$ is Corollary \ref{cor:brac0}. Assuming that \eqref{proof:lem:bracmaster1} holds for $s\leq k$, we have
		\begin{align*}
		\sum_{\alpha\in \Pi^A(\lie t^v)}\alpha(\xi)[\ad_X^{r}X_+^\beta,\ad_X^{k+1}X^\alpha_-]=&\ad_X\sum_{\alpha\in \Pi^A(\lie t^v)}\alpha(\xi)[\ad_X^{r}X_+^\beta,\ad_X^{k}X^\alpha_-]\\&-\sum_{\alpha\in \Pi^A(\lie t^v)}\alpha(\xi)[\ad_X^{r+1}X_+^\beta,\ad_X^{k}X^\alpha_-]=0.\qedhere
		\end{align*}
	\end{proof}

The 5th and the proof is completed once we observe that $[X_-^\alpha,\ad_{X_0}^{k}X^\beta_+]=0$ for any $\alpha\in\Pi^A(\lie t^v)$, provided \eqref{proof:brac0:0} holds for $s\leq k$.  However, since $X_0$ commutes with $\lie t^v$,  the term $[X_-^\alpha,\ad_{X_0}^{k}X^\beta_+]$ lies in $\lie g_{\beta-\alpha}(\lie t^v)$. Therefore, each term in  the induction hypothesis \eqref{proof:brac0:0}:
		\begin{equation*}
		\sum_{\alpha\in\Pi^A(\lie t^v)}\alpha(\xi)[X_-^\alpha,\ad_{X_0}^{k}X^\beta_+] =0.\label{proof:lem:bracmaster2}
		\end{equation*} 
lies in a different weight space. Since $\xi$ is arbitrary, each term must vanish, concluding the proof. 
	\end{proof}

\begin{proof}[Proof of Proposition \ref{prop:bracmaster}] We first prove the intermediate step $X=Y$ by following along the same lines as in the proof of Corollary \ref{cor:brac0}. Since $X_0$ is itself horizontal and does not influence $X_\pm$, we consider $X=t+X_++X_-$ where $t\in\lie t'$ can be chosen at our will. Given $\beta\in\Pi^A(\lie t^v)$, denote  $\Pi_\beta=\{\beta'\co \lie t'\to i\bb R~|~(\beta,\beta')\in\Pi(\lie t)\}$. Lemma \ref{lem:bracmaster} gives for all $l\geq 0$,
	\begin{equation*}
0=[ X_-^\alpha,\ad_{t}^lX_+^\beta]=\sum_{\beta'\in\Pi_\beta}\beta'(t)^l[ X_-^\alpha,(X_+^\beta)_{\beta,\beta'}]=\sum_{\beta'\in\Pi_\beta}\beta'(t)^l[ X_-^\alpha,(X_+^\beta)_{\beta,\beta'}].
	\end{equation*}
	Consider $t$ such that the values $\beta'(t)$, $\beta'\in\Pi_\beta$, are all distinct and nonzero. Taking enough values of $l$ gives
	\begin{equation}\label{proof:prop:bracmaster}
0=[X_-^\alpha,(X_+^\beta)_{\beta,\beta'}]=\sum_{\alpha'\in\Pi_{-\alpha}}[ (X_-^\alpha)_{-\alpha,\alpha'},(X_+^\beta)_{\beta,\beta'}].
	\end{equation}
Since $\alpha,\beta,\beta'$ are fixed, each term $[(X_-^\alpha)_{-\alpha,\alpha'},(X_+^\beta)_{\beta,\beta'}]$ lies in a different root space. Therefore, \eqref{proof:prop:bracmaster} implies that $[ (X_-^\alpha)_{-\alpha,\alpha'},(X_+^\beta)_{\beta,\beta'}]=0$ for all $\alpha,\alpha',\beta,\beta'$. Noting that  $(X_\pm^\alpha)_{\pm \alpha,\alpha'}=(X^\alpha)_{\pm \alpha,\alpha'}$, and writing $X_+=\sum X^\alpha_+$, $X_-=\sum X^\beta_-$, we get $[ (X_-)_{-\alpha,\alpha'},(X_+)_{\beta,\beta'}]=0$.

To proceed,  recall that $\lie g$ is the product of an abelian Lie algebra and a semi-simple Lie algebra. In particular, the root space $\lie g_{\alpha,\alpha'}(\lie t)$ is 1-dimensional and the brackets $[,]\co \lie g_{\alpha,\alpha'}(\lie t)\times\lie g_{\beta,\beta'}(\lie t)\to \lie g^\bb C$ are either zero, when $(\alpha+\alpha',\beta+\beta')\notin\Pi(\lie t)\cup\{(0,0)\}$, or are non-degenerate, i.e.,  $[x,y]=0$ only if $x=0$ or $y=0$. 

Let  $\pi_{\alpha,\alpha'}\co \lie g^\bb C\to\lie g_{\alpha,\alpha'}(\lie t)$ be the linear projection onto $\lie g_{\alpha,\alpha'}(\lie t)$ and denote $\pi_{\alpha,\alpha'}^\pm=\pi_{\alpha,\alpha'}\circ \pi_\pm(\lie t^v)$. 
Suppose  $(-\alpha+\beta,\alpha'+\beta')$ is a root (if not, $[(X_-)_{-\alpha,\alpha'},(Y_+)_{\beta,\beta'}]$ trivially vanishes). Since $[\pi_{-\alpha,\alpha'}^+(X),\pi_{\beta,\beta'}^-(X)]=0$  for every $X\in\HH$,
we conclude that $\HH=\ker \pi_{-\alpha,\alpha'}^+\cup \ker \pi_{\beta,\beta'}^-$. This is only possible if one of the kernels coincides with $\HH$. In particular,  for every pair $\alpha,\alpha',\beta,\beta'$, $[\pi_{-\alpha,\alpha'}^+(\cal H_+(\lie t^v)),\pi_{\beta,\beta'}^-(\cal H_-(\lie t^v))]=\{0\}$.
\end{proof}


Proposition \ref{prop:bracmaster} shows, in particular,  that $\ker\pi^+_{\alpha,\alpha'}\cup \ker\pi^-_{-\alpha,-\alpha'}=\HH$. On the other hand, $\cal H_\epsilon(\lie t^v)$ are real spaces therefore 
$\pi^\pm_{\alpha,\alpha'}\neq \{0\}$ if and only if its complex conjugate, $\pi^{\pm}_{-\alpha,-\alpha'}$, satisfy $\pi^{\pm}_{-\alpha,-\alpha'}\neq \{0\}$. Putting together these two pieces of information,  we conclude that  $\ker\pi^+_{\alpha,\alpha'}\cup\ker\pi^-_{\alpha,\alpha'}=\HH$, i.e., each root (together with its negative) can appear as a component  of at most one of th two spaces $\cal H_+(\lie t^v)$, $\cal H_-(\lie t^v)$. These arguments derive a central property of:
\begin{equation}\label{eq:upsilon}
\Upsilon_\pm(\lie t^v)=\{(\alpha,\alpha')\in \Pi(\lie t^v)~|~\exists X\in\HH,~(X_+)_{\alpha,\alpha'}\neq 0  \},
\end{equation}
which is a main object in the next section. 
We have shown:

\begin{cor}\label{cor:root1}
	$\Upsilon_+(\lie t)\cap \Upsilon_-(\lie t)=\emptyset$. In particular, $\cal H_+(\lie t^v)\cap \cal H_-(\lie t^v)=\{0\}$ and $\cal H_+(\lie t^v)\perp \cal H_-(\lie t^v)=\{0\}$.
\end{cor}

The orthogonality follows since $\cal H_\pm(\lie t^v)$ are real spaces.

\begin{rem}
	An important step  both in here (see Corollary \ref{cor:A}) and in \cite{ranjan}  is to show  that: 
	\begin{equation}\label{eq:A} 
	A_{X}Y=\h\Big([X_-,Y_-]-[X_+,Y_+]\Big)^v. 
	\end{equation}
	Such equality implies that $A^\xi X=\h\ad_\xi X_+ -\h\ad_\xi X_-$ for all $\xi\in\cal V_\I$. If one fix $\lie t^v$ and the same computations as in \cite{ranjan}, one can show that equation \eqref{eq:A} holds for $X=X^\alpha$, $Y=Y^\beta$, when $\alpha\neq \beta$. However, the information is lost for $\alpha=\beta$. We prove \eqref{eq:A} by using a much more refined decomposition.
\end{rem}

\subsection{The horizontal decomposition II}\label{sec:brac3}
We call an immersed subgroup
 $H\hookrightarrow G$  as \textit{$\cal V$-maximal} if its adjoint representation leaves $\cal V_{\I}$ invariant and it is transitive in the set of maximal vertical abelian subalgebras. That is,  $\Ad_H(\cal V_{\I})=\cal V_{\I}$ and, fixed $\lie t^v$,  every other maximal vertical abelian subalgebra is of the form $\Ad_h \lie t^v$. 
We recall a few points:
\begin{enumerate}
	\item if $L_{\I}$ is a subgroup, then $H=F_{\I}$ is $\cal V$-maximal;	\item  if $L_{\I}$ is an irreducible symmetric space which is not a Lie group, then a $\cal V$-maximal $H$ can be chosen as the subgroup whose Lie algebra is $\lie h=[\cal V_\I, \cal V_\I]$  (see Conlon \cite{conlon1972class} or Berestovskii-Nikonorov \cite[Lemma 7]{berestovskii-nikonorov}). It follows that $\lie h\subseteq \cal H_\I$;
	\item writing $h^*(\alpha,\beta)=(\alpha\circ \Ad_h^{-1},\beta\circ\Ad_h^{-1})$, 
\[	\Pi(\Ad_h\lie t)=\{h^*(\alpha,\beta)~|~(\alpha,\beta)\in\Pi(\lie t)\};\]
	\item $\lie g_{h^*(\alpha,\beta)}(\Ad_h\lie t)=\Ad_h (\lie g_{(\alpha,\beta)}(\lie t))$.
\end{enumerate}

Given a $\cal V$-maximal $H$, 
define the vector spaces 
\begin{align*}\label{eq:calHpm}
{\cal H}_\pm(\cal F)&=\sum_{h\in H}\cal H_\pm(\Ad_h\lie t^v),\\
\cal H_0(\cal F)&=\HH\cap ({\cal H}_+(\cal F)+{\cal H}_-(\cal F))^\bot.
\end{align*} 
It is clear that $\cal H_\epsilon(\cal F)$ is independent of $H$ and $\lie t^v$, however both are crucial in our prove of the commutation. We now state a main result:

\begin{theorem}\label{prop:Hpm}
Let $\cal F$ be a Ranjan foliation. Then 
\begin{enumerate}
	\item $\cal H_+(\cal F)\bot\cal H_-(\cal F)$;
	\item  $\cal H_+(\cal F)\cap \cal H_-(\cal F)=\{0\}$;
	\item  $[\cal H_+(\cal F),\cal H_-(\cal F)]{\,=\,}\{0\}$.	
\end{enumerate}
\end{theorem}

The current section is a preliminary step for Theorem \ref{prop:Hpm}, where we prove Proposition \ref{prop:Adtori}. Theorem \ref{prop:Hpm} is proved in section \ref{sec:proofHpm}. We fix an arbitrary maximal abelian subalgebras $\lie t^v\subseteq \lie t$ throughout.

\begin{prop}\label{prop:Adtori}
	For every $h\in H$, $\cal H_\pm(\Ad_h\lie t^v)=\Ad_h\cal H_\pm(\lie t^v)$.
\end{prop}

The proof of Proposition \ref{prop:Adtori} uses Proposition \ref{prop:bracmaster}, to control the set of $\rm{Ad}_h\lie t$-roots, and  the next three lemmas.

%

\begin{lem}\label{lem:AdgH0}
For every $h\in H$, $\cal H_0(\Ad_h\lie t^v)=\Ad_h\cal H_0(\lie t^v)$. 
\end{lem}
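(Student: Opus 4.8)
The plan is to deduce the statement from the intrinsic description of $\cal H_0$ recorded just after \eqref{eq:keradxi}, namely $\cal H_0(\lie t^v)=\HH\cap\bigcap_{\xi\in\lie t^v}\ker\ad_\xi$, together with the standard equivariance $\ad_{\Ad_g\xi}=\Ad_g\circ\ad_\xi\circ\Ad_g^{-1}$. First I would record two preliminary facts. (i) $\Ad_g$ restricts to a linear automorphism of $\HH$: since $g\in H$ we have $\Ad_g(\cal V_{\id})=\cal V_{\id}$ by the defining property of $H$, and because $\Ad_g$ is an isometry of the bi-invariant metric it also preserves $\cal H_{\id}=(\cal V_{\id})^\bot$, hence its complexification preserves $\HH$; it moreover commutes with complex conjugation, so it carries complexified real subspaces to complexified real subspaces. (ii) $\Ad_g\lie t^v$ is again a maximal vertical abelian subalgebra, so $\cal H_0(\Ad_g\lie t^v)$ is defined, and the description above applies to it: $\cal H_0(\Ad_g\lie t^v)=\HH\cap\bigcap_{\xi\in\lie t^v}\ker\ad_{\Ad_g\xi}$.

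Granting (i)--(ii), the proof is a direct chase. For $X\in\HH$ one has $X\in\cal H_0(\Ad_g\lie t^v)$ iff $\ad_{\Ad_g\xi}X=0$ for every $\xi\in\lie t^v$, iff $\Ad_g\bigl(\ad_\xi(\Ad_g^{-1}X)\bigr)=0$ for every $\xi$ by the equivariance identity, iff $\ad_\xi(\Ad_g^{-1}X)=0$ for every $\xi\in\lie t^v$; since $\Ad_g^{-1}X\in\HH$ by (i), the last condition is exactly $\Ad_g^{-1}X\in\cal H_0(\lie t^v)$, i.e. $X\in\Ad_g\cal H_0(\lie t^v)$. Both inclusions follow at once.

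I do not expect any real obstacle: this is the $\cal H_0$-analogue of what Proposition \ref{prop:Adtori} asserts for $\cal H_\pm$, and the only (routine) points are checking that $\Ad_g$ preserves $\HH$ and that the intersection $\bigcap_{\xi\in\lie t^v}\ker\ad_\xi$ transforms covariantly under $\Ad_g$ --- which is immediate once one tests it on a basis of $\lie t^v$ and uses that $\Ad_g$ maps such a basis to a basis of $\Ad_g\lie t^v$. This lemma will then feed, alongside Lemma \ref{cor:root1}, into the proof of Proposition \ref{prop:Adtori}.
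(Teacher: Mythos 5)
Your proposal is correct and follows essentially the same route as the paper: both use the description $\cal H_0(\lie t^v)=\HH\cap\bigcap_{\xi\in\lie t^v}\ker\ad_\xi$ from equation \eqref{eq:keradxi}, the fact that $\Ad_g$ preserves $\HH$ (since $g\in H$ preserves $\cal V_{\id}$ and the metric), and the equivariance $\Ad_g\ker\ad_\xi=\ker\ad_{\Ad_g\xi}$. Your element-wise chase is just an unpacked version of the paper's one-line computation with intersections.
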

\begin{proof}
$\cal H_0(\lie t^v)=\HH\cap_{\xi\in\lie t^v}\ker \ad_\xi$. Therefore:
\begin{align*}
\Ad_h\cal H_0(\lie t^v)=& \, (\Ad_h\HH)\cap\!\left(\;\underset{\mathclap{\xi\in\lie t^v}}{\cap}\Ad_h\ker \ad_\xi\!\right)\!\\
=&\, \HH\cap \left(\;\underset{\mathclap{\xi\in\lie t^v}}{\cap}\ker \ad_{\rm{Ad}_h\xi}\!\right)
=\HH\cap\left(\;\;\underset{\mathclap{\;\;\;\xi\in\rm{Ad}_h\lie t^v}}{\cap\;}\;\;\ker \ad_{\xi}\right)\!\!. \qedhere
\end{align*}
\end{proof}

Let $\Upsilon(\lie t)=\Upsilon_+(\lie t^v)\cup \Upsilon_-(\lie t^v)$. Since $\Ad_g$ fixes $ \cal H_{\I}$,  $h^*\Upsilon(\lie t)=\Upsilon(\Ad_h\lie t)$. 
%
Moreover:

\begin{lem}\label{prop:AdUpsilon}
For every $h\in H$, $\Upsilon_\pm(\Ad_h\lie t)=h^*(\Upsilon_\pm(\lie t))$.
\end{lem}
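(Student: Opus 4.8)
The plan is to run a connectedness argument over the group $H$. Recall that the preceding discussion already gives the identity
$\Upsilon_{+}(\Ad_{g}\lie t)\cup\Upsilon_{-}(\Ad_{g}\lie t)=g^{*}(\Upsilon_{+}(\lie t)\cup\Upsilon_{-}(\lie t))$, and Lemma \ref{cor:root1} says the two unions are disjoint, both for $\lie t$ and for $\Ad_{g}\lie t$. Since $g^{*}$ is a bijection of root systems restricting to a bijection between these two finite, constant‑size unions, it suffices to prove the inclusions $g^{*}\Upsilon_{\pm}(\lie t)\subseteq\Upsilon_{\pm}(\Ad_{g}\lie t)$; a cardinality count over the finite root system then upgrades them to equalities. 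I would prove these inclusions one root at a time.

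Fix $\rho\in\Upsilon_{+}(\lie t)\cup\Upsilon_{-}(\lie t)$ and, for $\varepsilon\in\{+,-\}$, set $H_{\varepsilon}^{\rho}=\{\,g\in H:g^{*}\rho\in\Upsilon_{\varepsilon}(\Ad_{g}\lie t)\,\}$. By Lemma \ref{cor:root1} applied at $\Ad_{g}\lie t$ the sets $H_{+}^{\rho},H_{-}^{\rho}$ are disjoint, and by the union identity above they cover $H$. The crux is that both are \emph{open}. Indeed, $g\in H_{\varepsilon}^{\rho}$ is equivalent to the linear map $\Phi_{\varepsilon}(g):=\pi_{g^{*}\rho}^{\Ad_{g}\lie t}\circ\pi_{\varepsilon}^{\Ad_{g}\lie t^{v}}\co\HH\to\lie g_{g^{*}\rho}(\Ad_{g}\lie t)$ being nonzero, i.e.\ having rank $\ge 1$, and rank $\ge 1$ is an open condition along any continuous family of linear maps. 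So everything reduces to the continuity of $g\mapsto\Phi_{\varepsilon}(g)$. The first factor is harmless: $\pi_{g^{*}\rho}^{\Ad_{g}\lie t}=\Ad_{g}\circ\pi_{\rho}^{\lie t}\circ\Ad_{g}^{-1}$, since $\Ad_{g}$ intertwines the root decompositions of $\lie t$ and of $\Ad_{g}\lie t$, hence is smooth in $g$.

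The delicate ingredient — and the only place where real work is needed — is the continuity of $g\mapsto\pi_{\varepsilon}^{\Ad_{g}\lie t^{v}}$, which is \emph{not} simply conjugation by $\Ad_{g}$ because $\Ad_{g}$ need not intertwine the $A$-tensor. I would argue as follows. By construction $\pi_{+}^{\Ad_{g}\lie t^{v}}$ is assembled, one pair $\{\alpha,-\alpha\}$ of $A$-roots at a time, from the joint eigenspaces $\cal H_{\pm\alpha}(\Ad_{g}\lie t^{v})$ of the commuting family $\{A^{\Ad_{g}\xi}\}_{\xi\in\lie t^{v}}$ (these commute by Theorem \ref{thm:tori}, which applies to $\Ad_{g}\lie t^{v}$ as well) together with the root-space projections onto $\lie g_{\pm\alpha}(\Ad_{g}\lie t^{v})=\Ad_{g}\lie g_{\pm\alpha}(\lie t^{v})$; Lemma \ref{lem:root1} guarantees the relevant dimensions are constant in $g$. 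Now $A^{\Ad_{g}\xi}$ is skew-adjoint, and O'Neill's identity \eqref{eq:Atoad} gives $(A^{\Ad_{g}\xi})^{2}=\frac{1}{4}\ad_{\Ad_{g}\xi}^{2}$ with $\ad_{\Ad_{g}\xi}$ conjugate to $\ad_{\xi}$; hence the eigenvalues of $A^{\Ad_{g}\xi}$, and their multiplicities, are independent of $g$. Consequently each eigenspace $\ker(A^{\Ad_{g}\xi}-\lambda)$, each finite intersection $\cal H_{\alpha}(\Ad_{g}\lie t^{v})=\bigcap_{\xi}\ker(A^{\Ad_{g}\xi}-\alpha(\xi))$, and each root space $\Ad_{g}\lie g_{\pm\alpha}(\lie t^{v})$ vary continuously with $g$, and so do the projections built from them; this yields the continuity of $\pi_{\varepsilon}^{\Ad_{g}\lie t^{v}}$ and hence of $\Phi_{\varepsilon}$. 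Granting this, $H=H_{+}^{\rho}\sqcup H_{-}^{\rho}$ is a partition of the connected group $H$ into two open sets, so exactly one of them is all of $H$; since $\Ad_{e}=\id$, the identity lies in $H_{+}^{\rho}$ when $\rho\in\Upsilon_{+}(\lie t)$ and in $H_{-}^{\rho}$ when $\rho\in\Upsilon_{-}(\lie t)$. Therefore $g^{*}\rho\in\Upsilon_{\varepsilon}(\Ad_{g}\lie t)$ for all $g\in H$, with $\varepsilon$ the sign of $\rho$. Ranging over $\rho$ gives $g^{*}\Upsilon_{\pm}(\lie t)\subseteq\Upsilon_{\pm}(\Ad_{g}\lie t)$, and the cardinality count from the first paragraph finishes the proof.
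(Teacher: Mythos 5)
Your proposal is correct and follows the paper's skeleton exactly at the top level: fix a root $\rho$, split the connected group $H$ into the disjoint sets $H^\rho_+$, $H^\rho_-$ using Lemma \ref{cor:root1} and the identity $\Upsilon_+(\Ad_g\lie t)\cup\Upsilon_-(\Ad_g\lie t)=g^*(\Upsilon_+(\lie t)\cup\Upsilon_-(\lie t))$, and reduce everything to openness of these sets. Where you genuinely diverge is in the openness step, which is where the paper does its real work. The paper first proves Claim \ref{claim:Upsilon1} (the rank of $\cal H_\pm(\Ad_g\lie t^v)$ is independent of $g$) through the rank identities \eqref{proof:AdUpsilon1}--\eqref{proof:AdUpsilon4} plus a semicontinuity-and-connectedness argument, and then assembles the spaces $\cal H_\pm(\Ad_g\lie t^v)$ into bundles $\overline{\cal H}_\pm(\lie t^v)$ over $H$ so that non-vanishing of the root-space component is an open condition. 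You instead encode $g\in H^\rho_\varepsilon$ as non-vanishing of the fixed-domain operator $\Phi_\varepsilon(g)=\pi^{\Ad_g\lie t}_{g^*\rho}\circ\pi^{\Ad_g\lie t^v}_\varepsilon$ and reduce openness to continuity of $g\mapsto\Phi_\varepsilon(g)$; this bypasses Claim \ref{claim:Upsilon1} entirely (you never need constancy of the rank of the image $\cal H_\pm(\Ad_g\lie t^v)$), at the price of needing the spectral projections of the family $\{A^{\Ad_g\xi}\}$ to vary continuously. One point there should be tightened: continuity of the joint-eigenspace projections requires constancy of $\dim\cal H_{g^*\alpha}(\Ad_g\lie t^v)$, which is not literally what Lemma \ref{lem:root1} gives, and constancy of the multiplicities of each single operator $A^{\Ad_g\xi}$ does not by itself control joint multiplicities. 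The fix is one line: $\bigcap_{\xi\in\lie t^v}\ker\bigl((A^{\Ad_g\xi})^2-\alpha(\xi)^2\bigr)=\Ad_g\bigl(\cal H_\alpha(\lie t^v)\oplus\cal H_{-\alpha}(\lie t^v)\bigr)$ has $g$-independent dimension, and complex conjugation interchanges $\cal H_{g^*\alpha}(\Ad_g\lie t^v)$ and $\cal H_{-g^*\alpha}(\Ad_g\lie t^v)$, so each has half that dimension; alternatively, choose $\xi_0\in\lie t^v$ with the values $\alpha(\xi_0)$, $\alpha\in\Pi^{\cal V}(\lie t^v)$, pairwise distinct, so that $\cal H_{g^*\alpha}(\Ad_g\lie t^v)=\ker\bigl(A^{\Ad_g\xi_0}-\alpha(\xi_0)\bigr)$ and your single-operator multiplicity argument applies verbatim. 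With that addition your argument is complete, and it is arguably more economical than the paper's: it trades the rank bookkeeping of Claim \ref{claim:Upsilon1} for standard continuity of spectral projections of commuting skew-adjoint operators with $g$-independent spectrum.
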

\begin{proof}
Given $(\alpha,\alpha')\in \Upsilon_+(\lie t)\cup\Upsilon_-(\lie t)$, we prove that
\[H^{\pm}_{\alpha,\alpha'}= \{h\in H~|~h^*(\alpha,\alpha')\in\Upsilon_\pm(\Ad_{h}\lie t)\}\]
are open subsets of $H$. Note that  $H^+_{\alpha,\alpha'}\cap H^-_{\alpha,\alpha'}=\emptyset$ (Corollary \ref{cor:root1}) and $H^+_{\alpha,\alpha'}\cup H^-_{\alpha,\alpha'}=H$, since $\Upsilon(\Ad_h\lie t^v)=h^*\Upsilon(\lie t^v)$. Since $H$ is connected, it is sufficient to prove that $H^\pm_{\alpha,\alpha'}$ are open.

Analogous to the proof of Proposition \ref{prop:bracmaster}, consider the projections 
\[\pi^\pm_{\alpha,\alpha'}(h)=\pi_{h^*\alpha,h^*\alpha'}\circ\pi^\pm(\Ad_h\lie t^v).\]

Now suppose that $(\alpha,\alpha')\in\Upsilon_+(\lie t^v)$. Then, there is $X$ such that $\pi^\pm_{\alpha,\alpha'}(\I)\neq 0$. Moreover,
\[\pi^\pm_{\alpha,\alpha'}(h)(X) = \pi^\pm_{\alpha,\alpha'}(h)(X^{h^*\alpha})=\frac{1}{2h^*\alpha(\Ad_h\xi)}((\textstyle \h\ad_{\Ad_h\xi}\pm A^{\Ad_h\xi})X)_{h^*\alpha,h^*\alpha'} \] 
for any $\Ad_h\xi\in\Ad_h\lie t^v$ such that $h^*\alpha(\Ad_h\xi)=\alpha(\xi)\neq 0$. By fixing $\xi\in\lie t^v$ we see that $\pi^\pm_{\alpha,\alpha'}(h)$ is  continuous as a family of operators with respect to $h$. Thus, if $\pi^\pm_{\alpha,\alpha'}(h)(X)\neq 0$, $\pi^\pm_{\alpha,\alpha'}(h')(X)\neq 0$ for $h'$ close to $h$, concluding that $H^\pm_{\alpha,\alpha'}$ is open.
\end{proof}
Given a set $S\subseteq \lie g$, denote $\cal{L}(S)$ as the subalgebra generated by $S$. Define the auxiliary spaces:
\begin{align*}\lie H_\pm(\lie t)&=  \cal L\left({\bigoplus_{(\alpha,\beta)\in\Upsilon_\pm(\lie t)}}\lie g_{(\alpha,\beta)}(\lie t)\right);\\
\lie H_0(\lie t)&= (\lie H_+(\lie t)+\lie H_-(\lie t))^\perp.\end{align*}

Proposition \ref{prop:bracmaster}, Corollary \ref{cor:root1} and invariance by complex conjugation guarantees that $[\lie H_+(\lie t),\lie H_-(\lie t)]=\{0\}$, $\lie H_+(\lie t)\bot \lie H_-(\lie t)$ and  $\lie H_+(\lie t)\cap \lie H_-(\lie t)=\{0\}$. 
Moreover, Lemma \ref{prop:AdUpsilon} implies that $\lie H_\pm(\Ad_h\lie t)=\Ad_h \lie H_\pm(\lie t)$ for all $h\in H$.

\begin{proof}[Proof of Proposition \ref{prop:Adtori}] Let $\pi_\pm(\lie t)\co \HH\to \lie H_\pm(\lie t)$ be the projections defined by the decomposition $\lie g=\lie H_+(\lie t)\oplus\lie H_-(\lie t)\oplus\lie H_0(\lie t)$. From Corollary \ref{cor:root1}, it follows that $\pi_\pm(\lie t)(\HH)=\cal H_\pm(\lie t^v)$.
From Lemma \ref{prop:AdUpsilon}, $\pi_{\pm}(\Ad_h\lie t)=\Ad_h\circ \pi_{\pm}(\lie t)\circ \Ad_{h^{-1}}$,  $h\in H$. Therefore,
\begin{align*}\cal H_\pm(\Ad_h\lie t^v)&=\pi_\pm(\Ad_h\lie t)(\HH)=\Ad_h(\pi_\pm(\lie t)(\Ad_{h^{-1}}\HH))\\&=\Ad_h(\pi_\pm(\lie t)(\HH))=\Ad_h(\cal H_\pm(\lie t^v)).  \qedhere\end{align*}
\end{proof}

Proposition \ref{prop:Adtori} gives a new characterization of  $\cal H_\pm(\cal F)$:  $\cal H_\pm(\cal F)$ is the smallest $\Ad_H$-invariant subset containing $\cal H_\pm(\lie t^v)$.

\subsection{Proof of Theorem \ref{prop:Hpm}}\label{sec:proofHpm}
In order to prove Theorem \ref{prop:Hpm}, fix $\lie t^v$ and observe from  Proposition \ref{prop:Adtori} and Jacobi identity that $[\cal H_+(\cal F),\cal H_-(\cal F)]=\{0\}$ if and only if $[\Ad_H\cal H_+(\lie t^v), \cal H_-(\lie t^v)]=\{0\}$. 
Moreover, every element in $H$ can be written as $e^\theta$ for some $\theta\in \lie h$, since $\lie g$ is compact and $H$ is connected.
Thus, a power series  argument guarantees  that $[\cal H_+(\cal F),\cal H_-(\cal F)]=\{0\}$ if and only if  $[\ad_\theta^k\cal H_+(\lie t^v),\cal H_-(\lie t^v)]=\{0\}$ for every $\theta\in\lie h$  and $k\geq 0$.

Our next aim is to show, by brute force, that $[\ad_\theta^k\cal H_+(\lie t^v),\cal H_-(\lie t^v)]=\{0\}$. We start by studying the elements of $\lie h$.

If $L_{\I}$ is a subgroup, we can take $\lie h=\cal V_I$. If $L_\I$ is an irreducible symmetric space which is not a group, we chose $\lie h=[\cal V_{\I},\cal V_{\I}]$. However, $L_{\I}$ might be reducible, so we write $\cal V_{\I}=\bigoplus\Delta_i$, where $\exp(\Delta_i)$ are locally irreducible symmetric spaces. An standard argument shows that the sum of the aforementioned options produces a $\cal V$-maximal group.

\begin{lem}\label{claim:Hpm1}
	If $i\neq j$, then $[\Delta_i,\Delta_j]=0$. In particular, $H$ is $\cal V$-maximal, for $\lie h=\sum\lie h_i$ where $\lie h_i=\Delta_i$ whether $\Delta_i$ is a subalgebra or $\lie h_i=[\Delta_i,\Delta_i]$ otherwise.
\end{lem}
\begin{proof}
	Since $L_{\I}$ is totally geodesic and is locally isometric to a metric product $\exp(\Delta_0)\times\cdots\times\exp(\Delta_s)$, the curvature tensor of $G$ at the identity satisfies $R(\Delta_i,\Delta_j)=\{0\}$. Therefore, $\lr{R(\xi,\eta)\eta,\xi}=\frac{1}{4}\|[\xi,\eta]\|^2=0$ for all $\xi\in\Delta_i$, $\eta\in\Delta_j$. In particular,  $\lie h=\sum \lie h_i$ integrates a subgroup which is, up to covering, a product $H=\tilde H_0\times\cdots\times \tilde H_s$. 
	
	To see that $H$ is transitive in the set of maximal vertical abelian subalgebras, note that a maximal abelian subalgebra of $\cal V_{\I}$ splits as $\lie t^v=\bigoplus \lie t^v\cap \Delta_i$ (one can use arguments as in Lemma \ref{lem:tori0}, for example). Thus, since each $\tilde H_i$ acts transitively on the set of abelian subalgebras of $\Delta_i$, $H$ acts transitively on the set of maximal abelian subalgebras of $\cal V_{\I}$.
\end{proof}
Whenever $\Delta_i$ is not a subalgebra, Besrestovskii--Nikonorov \cite[Lemma 7]{berestovskii-nikonorov} guarantees that $\exp(\Delta_i\oplus \lie h_i)$ is the full subgroup of isometries of $\exp(\Delta_i)$ and that $(\Delta_i\oplus \lie h_i,\lie h_i)$ is a symmetric pair (note that $[\Delta_i,\lie h_i]\subseteq \Delta_i$, since $\Delta_i$ is a Lie triple system -- see e.g. Helgason \cite{helgasondifferential}). In this case, $\lie h_i$ is  horizontal: it is orthogonal to $\Delta_j$, $j\neq i$, since $\lr{[\Delta_i,\Delta_i],\Delta_j}=\lr{\Delta_i,[\Delta_j,\Delta_i]}=\{0\}$;  and orthogonal to $\Delta_i$ since $[\Delta_i,\lie h_i]\subseteq \Delta_i$

We decompose $\lie h=\lie h^v\oplus \lie h^h$ in its vertical and horizontal components, denoting by $\Delta^\vee$ (respectively, $\Delta^\wedge$)  the sum of the $\Delta_i$-components which are subalgebras (respectively, which are not subalgebras).

Observe that $[\lie h^h,\lie h^v]=\{0\}$ and decompose $\theta\in \lie h $ in its horizontal and vertical components, $\theta=Z+\zeta$. We proceed by induction on $m$ to show that: for all $m,n\geq 0$,
\begin{equation}\label{proof:Hpm}
[\ad_{\zeta}^m\ad_Z^n\cal H_+(\lie t^v),\cal H_-(\lie t^v)]=\{0\}.
\end{equation} 
First we show that \eqref{proof:Hpm} holds for $m=0$ (Claim \ref{claim:Hpm2}), then, assuming that \eqref{proof:Hpm} holds for $m\leq k$, we show that it holds for $m=k+1$.

\begin{claim}\label{claim:Hpm2}
	$[\ad_Z^n\cal H_+(\lie t^v),\cal H_-(\lie t^v)]=\{0\}$ and $\ad_Z^n \cal H_+(\lie t^v)\perp \cal H_-(\lie t^v)$ for all $n\geq 0$.
\end{claim}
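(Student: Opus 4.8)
The statement is $[\ad_Z^n\cal H_+(\lie t^v),\cal H_-(\lie t^v)]=0$ for all $n\geq 0$, where $Z$ is the horizontal component of an arbitrary $\theta\in\lie h$. The strategy is to reduce this to Proposition \ref{prop:bracmaster} by understanding where $\ad_Z^n X_+$ sits relative to the $\cal H_\epsilon(\lie t^v)$-decomposition. The key point is that $Z\in\lie h^h$ is built from the components $\lie h_i=[\Delta_i,\Delta_i]$ associated to the non-subalgebra de Rham factors, and each such $(\Delta_i\oplus\lie h_i,\lie h_i)$ is a symmetric pair; in particular $\lie h^h$ is horizontal and $[\lie h^h,\lie t^v]\subset$ ? — more precisely, since each $\lie h_i$ commutes with the part of $\lie t^v$ coming from the other factors and with $\Delta_i$ itself only up to $\Delta_i$, we have $[\lie h^h,\lie t^v]\subset\cal V_{\id}$ (indeed $[\lie h_i,\lie t^v\cap\Delta_i]\subset\Delta_i$ by the Lie triple system property, and $[\lie h_i,\lie t^v\cap\Delta_j]=0$ for $j\neq i$). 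So $\ad_Z$ does not preserve $\lie t^v$, but $[Z,\lie t^v]$ stays vertical.

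The substantive step is to show that $\ad_Z$ maps $\cal H_+(\lie t^v)$ into (the $\lie t^v$-weight-space span of) $\cal H_+(\lie t^v)+\cal H_0(\lie t^v)$, i.e. it does not create $\cal H_-(\lie t^v)$-components, and symmetrically $\ad_Z$ preserves $\cal H_-(\lie t^v)+\cal H_0(\lie t^v)$. Granting this, an induction on $n$ finishes: for $n=0$ it is Proposition \ref{prop:bracmaster} (applied to the pair $\cal H_+(\lie t^v)$, $\cal H_-(\lie t^v)$, summed over all root components), and for the inductive step write $[\ad_Z^{n}X_+, Y_-] = [\ad_Z(\ad_Z^{n-1}X_+),Y_-]$, decompose $\ad_Z^{n-1}X_+ = P_+ + P_0$ with $P_\pm$ in the respective spaces and $P_0$ in $\cal H_0(\lie t^v)$, and use the Jacobi identity $[\ad_Z P, Y_-] = \ad_Z[P,Y_-] - [P,\ad_Z Y_-]$; the first term vanishes by the induction hypothesis (since $[P,Y_-]=0$), and in the second $\ad_Z Y_-$ again lies in $\cal H_-(\lie t^v)+\cal H_0(\lie t^v)$, so $[P_+, \ad_Z Y_-]=0$ by Proposition \ref{prop:bracmaster} again (the $\cal H_0$-parts commute with everything in $\lie t^v$-weight language and one checks brackets with $\cal H_0(\lie t^v)$ separately, or absorbs $\cal H_0$ into the argument of \ref{prop:bracmaster} — note $\cal H_0(\lie t^v)=\cap_\xi\ker\ad_\xi$, so its elements have weight $0$ and brackets against $\cal H_\pm$-weight vectors land in $\cal H_\pm$). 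To see that $\ad_Z$ preserves $\cal H_+(\lie t^v)+\cal H_0(\lie t^v)$ up to weight spaces: since $[Z,\lie t^v]\subset\cal V_{\id}\subset\cal H_0^\perp$ and more to the point $Z\in\lie h_i\subset\lie g_{(0,\delta)}$-type components for the $\Delta_i$-factor, $\ad_Z$ shifts the $\lie t^v$-root $\alpha$ by an amount coming only from $\lie t'\cap\Delta_i^\perp$-directions; since $Z$ is itself horizontal of "type $+$ and $-$ simultaneously" — here one uses that $\lie h_i$, being horizontal and $\Ad_H$-generated from $\cal H_0$ pieces, decomposes compatibly — a cleaner route is: $\ad_Z$ commutes with each $A^\xi=\frac12\ad_\xi$ up to lower order because $[Z,\xi]\in\cal V$ kills the relevant weight, forcing $\ad_Z$ to preserve the sign of $\alpha$ on weight vectors.

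\emph{The main obstacle} I anticipate is precisely this last bookkeeping: controlling how $\ad_Z$ interacts with the sign decomposition $\cal H_\pm(\lie t^v)$, since $Z$ need not be a weight vector for $\lie t^v$ and $\ad_Z$ need not preserve $\cal H$. The honest way around it is to replace "$\ad_Z$ preserves $\cal H_+(\lie t^v)+\cal H_0(\lie t^v)$" by the weaker, purely bracket-theoretic statement that suffices for the induction, namely that $[\ad_Z^{n-1}X_+, \ad_Z Y_-]=0$ and $[\ad_Z^{n-1}X_+, Y_-]=0$ imply $[\ad_Z^n X_+, Y_-]=0$ via Jacobi — which only requires knowing $[\ad_Z^{k}X_+,\cal H_-(\lie t^v)]=0$ and $[\cal H_+(\lie t^v),\ad_Z^{k}Y_-]=0$ for $k<n$, both instances of a single induction run simultaneously on $+$ and $-$. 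With that symmetric formulation the proof collapses to: base case = Proposition \ref{prop:bracmaster}, inductive step = Jacobi identity plus the observation that $\ad_Z$ maps $\cal H_\pm(\lie t^v)$ into the subalgebra $\lie h_\pm(\lie t)$ generated by $\oplus_{(\alpha,\beta)\in\Upsilon_\pm(\lie t)}\lie g_{(\alpha,\beta)}(\lie t)$, which holds because $Z\in\lie h$ normalizes this decomposition by Lemma \ref{prop:AdUpsilon} and the discussion following it ($\lie h_\pm(\Ad_g\lie t)=\Ad_g\lie h_\pm(\lie t)$, differentiated at $g=e$ gives $[\lie h,\lie h_\pm(\lie t)]\subset\lie h_\pm(\lie t)$, hence $\ad_Z\cal H_\pm(\lie t^v)\subset\ad_Z\lie h_\pm(\lie t)\subset\lie h_\pm(\lie t)$, and $[\lie h_+(\lie t),\lie h_-(\lie t)]=0$).
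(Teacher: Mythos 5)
Your overall strategy is the right one --- reduce everything to Proposition \ref{prop:bracmaster} by showing that $\ad_Z$ sends the ``$+$'' side into something that still commutes with $\cal H_-(\lie t^v)$ --- but the step you rely on to make this work is not justified. You claim $[\lie h,\lie h_\pm(\lie t)]\subset\lie h_\pm(\lie t)$ by ``differentiating'' the identity $\lie h_\pm(\Ad_g\lie t)=\Ad_g\lie h_\pm(\lie t)$ at $g=e$. That identity equates two subspaces that \emph{both} move with $g$; it is an equivariance statement, not an invariance statement, and differentiating it tells you nothing about whether $[\theta,\lie h_\pm(\lie t)]$ lies back in $\lie h_\pm(\lie t)$. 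The conclusion you want is equivalent to $\lie h_\pm(\Ad_g\lie t)=\lie h_\pm(\lie t)$ for $g$ near $e$, which is not established at this point (it is essentially of the same nature as the statement being proved, and for the vertical part of $\lie h$ it is exactly what the long induction in the proof of Theorem \ref{prop:Hpm} has to work around rather than assume). Your fallback ``purely bracket-theoretic'' double induction does not close the gap either: writing $[\ad_Z^{n}X_+,Y_-]=[Z,[\ad_Z^{n-1}X_+,Y_-]]-[\ad_Z^{n-1}X_+,\ad_ZY_-]$ forces you to control $[\ad_Z^{k}\cal H_+(\lie t^v),\ad_Z^{m}\cal H_-(\lie t^v)]$ for all $k,m$, and the needed base cases $[\cal H_+(\lie t^v),\ad_Z^{m}\cal H_-(\lie t^v)]=0$, $m\geq 1$, are not instances of Proposition \ref{prop:bracmaster}; they are the mirror image of the claim itself, so the induction never gets off the ground without the structural input you were trying to avoid.

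The missing idea is to decompose $Z$ itself. Since $Z\in\lie h^h$ is horizontal, it has a decomposition $Z=Z_0+Z_++Z_-$ in the $\cal H_\epsilon(\lie t^v)$-splitting, and one may choose the complement $\lie t'$ so that $Z_0\in\lie t'$. Then $\ad_Z$-invariance of $\lie h_+(\lie t)$ is checked piecewise: $\ad_{Z_0}$ preserves it because $Z_0\in\lie t$ and $\lie h_+(\lie t)$ is built from $\lie t$-root spaces; $\ad_{Z_+}$ preserves it because $\lie h_+(\lie t)$ is a subalgebra containing $Z_+$; and $\ad_{Z_-}$ kills it because $Z_-\in\lie h_-(\lie t)$ and $[\lie h_+(\lie t),\lie h_-(\lie t)]=0$ (Proposition \ref{prop:bracmaster} together with Lemma \ref{cor:root1}). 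With that invariance, $\ad_Z^n\cal H_+(\lie t^v)\subset\lie h_+(\lie t)$ and the claim follows immediately from $[\lie h_+(\lie t),\cal H_-(\lie t^v)]=0$ --- no induction on $n$ is needed. As written, your argument has a genuine gap at precisely the point you yourself flagged as the main obstacle.
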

\begin{proof}
	Let $Z\in\lie h^h$ and write $Z_\epsilon=\pi_\epsilon(\lie t^v)(Z)\in\cal H_\epsilon(\lie t^v)$. We choose $\lie t=\lie t^v\oplus \lie t'$ such that $Z_0\in\lie t'$. 
	Since $[\lie H_+(\lie t),\cal H_-(\lie t^v)]=\{0\}$, $\lie H_+(\lie t)\perp\cal H_-(\lie t^v)$ and $\cal H_+(\lie t^v)\subseteq \lie H_+(\lie t)$, it is sufficient to show that $\lie H_+(\lie t)$ is $\ad_Z$-invariant. But, $\ad_{Z_0}(\lie H_+(\lie t))
	\subseteq \lie H_+(\lie t)$, since $Z_0\in\lie t$ and $\lie H_+(\lie t)$ is a sum of weight spaces;  $\ad_{Z_+}(\lie H_+(\lie t))
	\subseteq \lie H_+(\lie t)$,  since $Z_+\in \lie H_+(\lie t)$; $\ad_{Z_-}(\lie H_+(\lie t))=\{0\}
	\subseteq \lie H_+(\lie t)$,  by Proposition \ref{prop:bracmaster}.
\end{proof}
From now on, we assume $\lie h^v\neq\{0\}$  and proceed to technical steps.
\begin{claim}\label{claim:Hpm0}
	$X_\pm^v\in\Delta^\wedge$. In particular, $\ad_\zeta^m(\cal H_\pm(\lie t^v))\subseteq\HH$,
\end{claim}
\begin{proof}
		Let $X_\pm^\alpha=(X^{\alpha})_+\in\cal H_\pm(\lie t^v)$ be the $\cal H_\pm(\lie t^v)$ component of an $\alpha$-$A$-weight. Recall that $\Delta^\vee$ is a subalgebra and $[\Delta^\vee,\Delta^\wedge]=0$. Therefore, $\ad_{\Delta^\vee}$ preserves the decomposition $\Delta^\vee\oplus\Delta^\wedge\oplus \HH$. Moreover, if 		 $\alpha(\xi)\neq 0$ for some $\xi\in\Delta^\vee\cap \lie t^v$, 
		\begin{equation*}
		\HH\ni (\alpha(\xi)\pm \textstyle{\h}\ad_{\xi})X^\alpha=(\alpha(\xi)\pm\h\ad_{\xi})(X^\alpha_++X^\alpha_-)=2\alpha(\xi)X^\alpha_\pm.
		\end{equation*} 
		Therefore,  $(X_+^\alpha)^v\neq 0$ only if $\alpha(\xi)=0$ for every $\xi\in \Delta^{\vee}\cap \lie t^v$. 
		Since $\cal H_0(\lie t^v)\subseteq \HH$, we conclude that $(X_+^\alpha)^v\neq 0$ only if $\alpha(\xi')=1$ for some $\xi'\in\lie t^v\cap \Delta^\wedge$. Thus, 
		\[\lr{X^\alpha_\pm,\Delta^\vee}=\lr{\pm\tfrac{1}{2}\ad_{\xi'}X^\alpha_\pm,\Delta^\vee}=-\tfrac{1}{2}\lr{X^\alpha_\pm,\ad_{\xi'}\Delta^\vee}=0. \]
		The Claim is concluded by observing that $\Delta^\vee,\Delta^\wedge$ are real spaces.
\end{proof}

The next claim is a common induction step in the next proofs.

\begin{claim}\label{claim:Hpm induction}
	Suppose that $X'\in \HH\cap (\cal H_0(\lie t^v)+\cal H_+(\lie t^v))$ and $[X',\cal H_-(\lie t^v)]=0$. Then $\ad_\zeta X'\in \HH\cap (\cal H_0(\lie t^v)+\cal H_+(\lie t^v))$ and $[\ad_\zeta X',\cal H_-(\lie t^v)]=0$.
\end{claim}
\begin{proof}
	Note that $\ad_\zeta X'\in \HH\cap (\cal H_0(\lie t^v)+\cal H_+(\lie t^v))$:
	\begin{equation}\label{eq:commuting}
	\lr{\ad_\zeta X',\cal H_-(\lie t^v)}=\lr{\zeta,[X',\cal H_-(\lie t^v)]}=0.
	\end{equation}
	
	Therefore, since $\ad_\zeta X'$ has no $\cal H_-(\lie t^v)$-component,
	\begin{equation*}
	[\ad_\zeta X',Y_-]=[(\ad_\zeta X')_0,Y_-]=[(\ad_\zeta X'_0)_0,Y_-]+[(\ad_\zeta X'_+)_0,Y_-].
	\end{equation*}
	We show that both terms are zero. Since $\lie t^v\cap \Delta^\vee$ is a maximal torus, we can write $\zeta=\zeta_0+\sum\zeta_\alpha$, where $\zeta_0\in\lie t^v\cap \Delta^\vee$ and $\zeta_\alpha\in\lie g_\alpha(\lie t^v)$, $\alpha\neq 0$.
	On the other hand, $\ad_{\zeta_\alpha}X_0'\in\lie g_\alpha(\lie t^v)$. Thus, $(\ad_\zeta X'_0)_0=(\ad_{\zeta_0} X'_0)_0=0$, since  $\ad_{\lie t^v}\cal H_0(\lie t^v)=0$.
	
	On its turn, the second term belongs to $\lie H_-(\lie t)$, for $\lie t'$ such that $ (\ad_\zeta X'_+)_0\in \lie t'$. On the other hand, by replacing $X'$ by $X'_+$ on equation \eqref{eq:commuting}, we have
	$[(\ad_\zeta (X'_+))_0,Y_-]=[\ad_\zeta X'_+,Y_-]$. Thus,
	\begin{equation*}
	\lr{[\ad_\zeta X'_+,Y_-],\lie H_-(\lie t)}=\lr{[\ad_\zeta Y_-,X'_+],\lie H_-(\lie t)}=\lr{\ad_\zeta Y_-,[X'_+,\lie H_-(\lie t)]}=0.
	\end{equation*}
	Since $\lie H_-(\lie t)$ is a real space, we conclude that  $[\ad_\zeta X'_+,Y_-]=[\ad_\zeta X',Y_-]=0$. 
\end{proof}

In particular, $\ad_\zeta^m X_+\in\HH\cap(\cal H_0(\lie t^v)+\cal H_+(\lie t^v))$ for $m\geq 1$.

\begin{claim}\label{claim:Hpm2.66}
	For any $X\in\HH$, there is a decomposition $X_+=\overline X_++X'$ where $Y_+\in \HH\cap \cal H_+(\lie t^v)$ and $\ad_\zeta X'=0$. Moreover, $\ad_Z^n \overline X_+\in \HH\cap (\cal H_0(\lie t^v)+\cal H_+(\lie t^v))$.
\end{claim}
\begin{proof}
	
	We prove the Claim for each component $X^\alpha_+=(X^\alpha)_+$ of $X_+$, considering separate cases: if $\alpha(\lie t^v\cap \Delta^\vee)\neq \{0\}$, $\overline X_+=X^\alpha_+$ and $X'=0$ satisfies the desired conditions.
	
	On the other hand, since $[\Delta^\vee,\Delta^\wedge]=\{0\}$:
	\[\ad_{\xi'}\ad_\zeta X_+^\alpha=\ad_\zeta\ad_{\xi'}X^\alpha_+=2\alpha(\xi')\ad_\zeta X^\alpha_+ \]
	for every $\xi'\in\lie t^v\cap \Delta^\wedge$.
	Note that $\ad_\zeta X^\alpha_+ \in \cal H_+(\lie t^v)$ (Claim \ref{claim:Hpm induction} plus the fact that $\ad_{\xi'}\ad_\zeta X^\alpha_+\neq 0 $ for some $\xi'\in\lie t^v$), thus, $\ad_\zeta$ preserves the eigenspaces $V_\lambda$ of $\ad_{\xi'}$. Thus, $X^\alpha_+$ can be decomposed as $\overline X_++X'$, where $\overline X_+\in\ad_\zeta(V_\lambda)$ and $X'\in\ker\ad_\zeta$.
	
	The second statement follows since $\ad_Z$ preserves $\HH$ and Claim \ref{claim:Hpm2}:
	\begin{equation}
	\lr{\ad_Z^n \overline X_+,\cal H_-(\lie t^v)}=\lr{Z,[\ad_Z^{n-1}\overline X_+,\cal H_-(\lie t^v)]}=0.\qedhere
	\end{equation}
\end{proof}

\begin{proof}[Proof of Theorem \ref{prop:Hpm}]
	The main item in Theorem \ref{prop:Hpm} is the third item, from where we start the proof. To this aim, we proceed by induction on $m\geq 1$ on:
		\begin{align}\label{proof:Hpm:0}
	\ad_\zeta^m\ad_Z^n X_+\in\HH\cap(\cal H_0(\lie t^v)+\cal H_+(\lie t^v)),\\
	[\ad_{\zeta}^{m}\ad_Z^nX_+,\cal H_-(\lie t^v)]=0, \label{proof:Hpm:1} 
	\end{align}
	for every $n\geq 0$. From Claim \eqref{claim:Hpm2.66}, induction on \eqref{proof:Hpm:0}, \eqref{proof:Hpm:1} is equivalent to induction on
		\begin{align*}
	\ad_\zeta^m\ad_Z^n \overline X_+\in\HH\cap(\cal H_0(\lie t^v)+\cal H_+(\lie t^v)),\\
	[\ad_{\zeta}^{m}\ad_Z^n\overline X_+,\cal H_-(\lie t^v)]=0, 
	\end{align*}
	for $\bar X_+\in\HH\cap \cal H_{+}(\lie t^v)$. The last induction follows from Claim \ref{claim:Hpm induction}, concluding item \textit{(3)} in Theorem \ref{prop:Hpm}. 
	
	Since $\cal H_{\pm}(\cal F)$ are real spaces, item \textit{(2)} follows from item \textit{(1)}. Moreover, item \textit{(1)} holds once it is proved that $\ad_\zeta^m\ad^n_Z X_+\perp \cal H_-(\lie t^v)$ for every $m,n\geq 0$. The case $m=0$ is in Claim \ref{claim:Hpm2} and $m\geq 1$ is \eqref{proof:Hpm:0}.
\end{proof}

As an application of Theorem \ref{prop:Hpm}, we characterize the $A$-tensor.

\begin{lem}\label{cor:A}
Let $\cal F$ be a Ranjan foliation and write $X=X_0+X_++X_-$, where $X_\epsilon\in\cal H_\epsilon(\cal F)$. Then $A^\xi X=\h\ad_\xi(X_+-X_-).$	In particular,
\begin{equation}\label{eq:Aproved}
 A_XY=\h\left([X_-,Y_-]-[X_+,Y_+] \right)^v. 
\end{equation}\end{lem}
\begin{proof}
	Fix $X\in\cal H_{\I}$ and $\xi\in\cal V_{\I}$. 	
	Let $\lie t^v$ be a maximal abelian vertical subalgebra containing $\xi$. Using the linearity of $A^\xi$, we divide the proof into two cases: $(i)$ $X$ has no $\cal H_0(\lie t^v)$-component; $(ii)$  $X\in\cal H_0(\lie t^v)$. 
	
	Denote by 
	\begin{gather*}
	\pi^\epsilon\co \cal H_+(\lie t^v)+\cal H_-(\lie t^v)+\cal H_0(\lie t^v)\to \cal H_\epsilon(\cal F)\\ \pi^\epsilon(\lie t^v)\co \HH \to \cal H_\epsilon(\lie t^v) 
	\end{gather*}
	the respective orthogonal projections. Supposing that $\pi^0(\lie t^v)(X)=0$, we have
	\[\textstyle A^\xi X= A^\xi(\pi^+(\lie t^v)(X)+\pi^-(\lie t^v)(X))=\h\ad_\xi (\pi^+(\lie t^v)(X))-\h\ad_\xi(\pi^-(\lie t^v)(X)).\]
	On the other hand, since $\cal H_\pm(\lie t^v)\subseteq \cal H_\pm(\cal F)$, $\pi^\pm\circ \pi^\pm(\lie t^v)=\pi^\pm(\lie t^v)$ and $\pi^{\mp}\circ\pi^{\pm}(\lie t^v)=0$.
		
	Now, assume that $X\in\cal H_0(\lie t^v)$. We claim that $\pi^\pm(\cal H_0(\lie t^v))\subseteq \cal H_0(\lie t^v)$. By following Claim \eqref{claim:Hpm2} and \eqref{proof:Hpm:0}, we conclude that an element $X_\pm$ is composed by components lying either in $\cal H_0(\lie t^v)$ (components with $m\geq 1$ in \eqref{proof:Hpm:0}) or in $\sum_{\lie t\supseteq \lie t^v}\lie H_\pm(\lie t)$ (Claim \ref{claim:Hpm2}). I.e., $\cal H_+(\cal F)$ decomposes as
	\begin{equation}\label{eq:HFbounds}
	\cal H_\pm(\cal F)= (\cal H_\pm(\cal F)\cap \cal H_0(\lie t^v))\oplus\left( \cal H_\pm(\cal F)\cap \sum_{\lie t\supseteq \lie t^v}\lie H_\pm(\lie t)\right).
	\end{equation}
	Since the second space space is orthogonal to $\cal H_0(\lie t^v)$, $\pi^0(\cal H_0(\lie t^v))$ do not have components on it, thus concluding that $\pi^\pm(\cal H_0(\lie t^v))\subseteq \cal H_0(\lie t^v)$.

	With $A^\xi X=\h\ad_\xi(X_+-X_-)$ at hand, equation \eqref{eq:Aproved} is straightforward:
	\begin{align*}
	-2\lr{A_XY,\xi}&= 2\lr{A^\xi X,Y }=\lr{\ad_\xi (X_+-X_-),Y}=\lr{\ad_\xi (X_+-X_-),Y_++Y_-}\\
	&=\lr{\xi,[X_+,Y_++Y_-]-[X_-,Y_++Y_-]}=\lr{\xi,[X_+,Y_+]-[X_-,Y_-]}.\qedhere
	\end{align*}
\end{proof}

\subsection{Proof of Theorem \ref{thm:grove}}\label{sec:proof}
We now have all elements to prove Theorem \ref{thm:grove}. To simplify notation, we denote $\cal H_\epsilon(\cal F)=\cal H_\epsilon$.

As pointed out in section \ref{sec:pi1}, it is sufficient to prove Theorem \ref{thm:grove} for irreducible foliations. 
In this case, Theorem \ref{thm:A} guarantees that $\cal V_\I$ is spanned by the image of the $A$-tensor. Thus Lemma \ref{cor:A} gives:
	\begin{equation*}
	\cal V_\I\subseteq \HH+ [\cal H_+ ,\cal H_+ ]+[\cal H_- ,\cal H_- ].
	\end{equation*}
	In particular:
\begin{equation}\label{eq:V}
	\lie g^\bb C \subseteq \cal H_0 +\cal L(\cal H_+ )+\cal L(\cal H_- ), 
\end{equation}
	where $\cal L(S)$ is the Lie algebra generated by $S\subseteq \lie g^\bb C$. Moreover:
\begin{claim}\label{claim:H0}
	$[\cal H_0 ,\cal H_\epsilon ]\subseteq \cal H_\epsilon $.
\end{claim}
\begin{proof}
	Observe that 
	\begin{align*}
	\cal H_0(\cal F)&=\HH\cap ({\cal H}_+(\cal F)+{\cal H}_-(\cal F))^\bot\\
	&=\HH\cap \bigcap_{h\in H}({\cal H}_+(\Ad_h\lie t^v)+{\cal H}_-(\Ad_h\lie t^v))^\bot=\bigcap_{h\in H}\cal H_0(\lie t^v)\\
	&=\{X\in\HH~|~ \ad_\xi X=0,~\forall\xi\in\cal V_I\}. \end{align*}
	In particular,  $\cal H_0 $ is a subalgebra: let $X,Y\in\cal H_0 $, $\xi\in\cal V_\I$, then
	\[\ad_\xi[X,Y]=[\ad_\xi X,Y]+[X,\ad_\xi Y]=0. \]
	Moreover, $[\cal H_0 ,\cal{V}_\I]=\{0\}$, thus $\cal H_0^\bot$, $\HH$  and therefore $\cal H_0 ^\perp\cap \HH$ are invariant under $\ad_{\cal H_0 }$. In particular,  $[\cal H_0 ,\cal H_+ +\cal H_- ]\subseteq \HH\cap(\cal H_+ +\cal H_-) $. On the other hand,
	\[\lr{[\cal H_0 ,\cal H_\pm ],\cal H_\mp  }=-\lr{\cal H_0 ,[\cal H_\pm ,\cal H_\mp ] }=0.\qedhere   \] 
\end{proof}

\begin{cor}
	$\cal L(\cal H_\pm )$ is an ideal. 
\end{cor}
\begin{proof}
	Let $Z\in\lie g^\bb C$. Then $Z=Z_0+Z_++Z_-$, where $Z_\epsilon\in\cal L(\cal H_\epsilon )$ (equation \eqref{eq:V}). But
	$[Z_0,~\cal L(\cal H_\pm)]\subseteq \cal L(\cal H_\pm) $ by Claim \ref{claim:H0}; $[Z_\pm,\cal L(\cal H_\pm)]\subseteq \cal L(\cal H_\pm)$ by the {definition of }$\cal L(\cal H_\pm)$; and
	$[Z_\mp,\cal L(\cal H_\pm)]=\{0\}$ by {Theorem \ref{prop:Hpm}} and Jacobi identity.
\end{proof}

Since $\cal L(\cal H_\pm)$ are real subspaces, their real parts are ideals of $\lie g$, which we denote by $\cal L(\cal H_\pm)$ as well. In particular $G$ decomposes as $G= G_+{\times }G_-{\times }G_0$, where $G_\epsilon$ is the subgroup whose Lie algebra is
\begin{align*}
\lie g_\pm & = \cal L(\cal H_\pm)\cap (\cal L(\cal H_+)\cap \cal L(\cal H_-))^\perp,\\
\lie g_0& = (\lie g_++\lie g_-)^\perp+\cal L(\cal H_+)\cap \cal L(\cal H_-).
\end{align*}
By observing that $\cal H_\pm\perp \cal L(\cal H_\mp)$, we conclude that $\cal H_++\cal H_-\perp \lie g_0$. 

We claim that $A_{X}Y=\h([X_-,Y_-]-[X_+,Y_+])^v$, where $X_\epsilon\in\lie g_\pm$. Denote $\pi^\epsilon(Z)$ the $\cal H_\epsilon $-component of $Z\in\HH$. Since $[\pi^0(Z),\xi]=0$ for all $\xi\in\cal V_\I$, we conclude that  $[\pi^0(Z)_\epsilon,\xi]=[\pi^0(Z)_\epsilon,\xi_\epsilon] =0$ for $\epsilon=0,+,-$. Thus,
\begin{align*}
\lr{[X_-,Y_-],\xi}=&\lr{[\pi^-(X)+\pi^0(X)_-,\pi^-(Y)+\pi^0(Y)_-],\xi}\\
=&\lr{[\pi^-(X),\pi^-(Y)+\pi^0(Y)_-],\xi}+\lr{\pi^-(Y)+\pi^0(Y)_-,[\xi,\pi^0(X)_-]}\\
=&\lr{[\pi^-(X),\pi^-(Y)],\xi}+\lr{\pi^-(Y)+\pi^0(Y)_-,[\xi,\pi^0(X)_-]}+\lr{\pi^-(X),[\pi^0(Y)_-,\xi]}\\
=&\lr{[\pi^-(X),\pi^-(Y)],\xi}.
\end{align*}
Analogously, $\lr{[X_+,Y_+],\xi}=\lr{[\pi_+(X),\pi(Y)_+],\xi}$. The claim now follows from Lemma \ref{cor:A}.

The proof is  almost finished and follows from Munteanu--Tapp Corollary \ref{cor:TappMunteanu}.
%
%
It is only left to produce an isometry of $G$ whose resulting foliation satisfy $A^\xi X=\h\ad_\xi X$.
Let $\Phi\co G\to G$ be the isometric involution defined by 
\[\Phi(g_+,g_-,g_0)=(g_+,g_-^{-1},g_0), \]
and consider 
\[\tilde{\cal F}=\{\Phi(L)~|~L\in \cal F \}.\] 
Denote $\tilde Z=d\Phi(Z)$, $\tilde{\cal V}_\I=d\Phi(\cal V_\I)$ and by $\tilde A$ the $A$-tensor of $\tilde{\cal F}$.  Observe that $d\Phi_e(Z_\pm)=\pm Z_\pm$ and recall that $\tilde A_{\tilde X}\tilde Y=d\phi(A_XY)$. We have:
\begin{align*}
\tilde A_{\tilde X}\tilde Y&=d\Phi(A_XY)=\h\left(d\Phi([X_-,Y_-]-[X_+,Y_+]) \right)^{\tilde{\cal V}_\I}\\
&=\h\left(-[X_-,Y_-]-[X_+,Y_+] \right)^{\tilde{\cal V}_\I}=-\h\left([\tilde X_-,\tilde Y_-]+[\tilde X_+,\tilde Y_+] \right)^{\tilde{\cal V}_\I},
\end{align*}
where the third equality follows since $[X_\pm,Y_\pm]\in \lie g_\pm$ and the last since
\[[\tilde X_\pm,\tilde Y_\pm]=[\pm X_\pm,\pm Y_\pm ]=[X_\pm,Y_\pm]. \]

In particular, the respective dual tensor is given by 
\[\tilde A^{\tilde \xi}\tilde X=\h\ad_{\tilde \xi} \tilde X.
\]
Corollary \ref{cor:TappMunteanu} guarantees that $\tilde{\cal V}_I$ is a subalgebra and that $\tilde{\cal F}$ is the coset fibration defined by the subgroup integrated by $\tilde{\cal V}_\I$, completing the proof.
\halmos

\bibliographystyle{amsplain}
\bibliography{bibgrove}

\end{document}